\theoremstyle{plain}
\newtheorem{thm}{Theorem}[section]
\newtheorem{prop}[thm]{Proposition}
\newtheorem{cor}[thm]{Corollary}
\newtheorem{lemma}[thm]{Lemma}
\newtheorem{conj}[thm]{Conjecture}
\theoremstyle{definition}
\newtheorem{definition}[thm]{Definition}
\newtheorem{remark}[thm]{Remark}
\newtheorem{example}[thm]{Example}
\newtheorem{problem}[thm]{Problem}
\newtheorem{question}[thm]{Question}
\newcommand{\G}{\Gamma}
\newcommand{\tG}{{\tilde{\Gamma}}}
\newcommand{\mm}{\mathfrak{m}}
\newcommand{\NN}{\mathbb{N}}
\newcommand{\w}{\mathrm{wd}}
\newcommand{\gr}{\mathrm{gr}}
\newcommand{\Ap}{\mathrm{Ap}}
\newcommand{\ord}{\mathrm{ord}}
\newcommand{\ovI}{\overline{I}}
\newcommand{\ovR}{\overline{R}}
\newcommand{\ovP}{\overline{P}}
\newcommand{\wS}{\widehat{S}}
\newcommand{\wI}{\widehat{I}}
\newcommand{\wL}{\widehat{L}}
\newcommand{\Lex}{\mathrm{Lex}}
\newcommand{\HF}{\mathrm{HF}}
\newcommand{\HS}{\mathrm{HS}}
\begin{document}

\author[G.\,Caviglia, A.\,Moscariello, A.\,Sammartano]{Giulio~Caviglia, Alessio~Moscariello and Alessio~Sammartano}
\address{(Giulio Caviglia) Department of Mathematics\\Purdue University\\West Lafayette, IN 47907-2067\\USA}
\email{gcavigli@purdue.edu}
\address{(Alessio Moscariello) Dipartimento di Matematica e Informatica\\Università degli Studi di Catania\\Catania\\Italy}
\email{alessio.moscariello@unict.it}
\address{(Alessio Sammartano) Dipartimento di Matematica \\ Politecnico di Milano \\ Milan \\ Italy}
\email{alessio.sammartano@polimi.it}

\title{Bounds for syzygies of monomial curves}

\subjclass[2020]{Primary: 13D02, 13F65; Secondary: 05E40, 13F55, 20M14}

\begin{abstract}
Let $\G \subseteq \NN$ be a numerical semigroup.
In this paper, we prove an upper bound for the Betti numbers of the semigroup ring of $ \G$ which depends only on the width of $\G$, that is, the difference between the largest and the smallest generator of $\G$.
In this way, we make progress towards a conjecture of Herzog and Stamate.
Moreover,
for 4-generated numerical semigroups, the first significant open case,
we prove  the Herzog-Stamate bound for all but finitely many values of the width.
\end{abstract}

\maketitle

\section{Introduction}

Let $\G  \subseteq \NN$
be a numerical semigroup
with minimal generators $g_0 < g_1 < \cdots < g_\nu$.
Its semigroup ring  is  $R_\G = \Bbbk \llbracket t^\gamma \, : \, \gamma \in \G\rrbracket \subseteq \Bbbk \llbracket t\rrbracket$, where $\Bbbk$ is a field,
that is,
 the complete local ring of the affine monomial curve 
$x_0  = t^{g_0}, x_1=t^{g_1}, \ldots, x_\nu = t^{g_\nu}$
at its singularity at the origin.
Let $P = \Bbbk\llbracket x_0, x_1, \ldots, x_\nu\rrbracket$ be a power series ring, 
then $R_\G$ is isomorphic to the quotient $P/I_\G$, where $I_\G$ is the toric ideal of $\G$, 
that is, the kernel of the surjection defined by $x_i \mapsto t^{g_i}$.

In this paper, we study the Betti numbers of  $R_\G$:
$$
b_i(R_\G) = \dim_\Bbbk \mathrm{Tor}_i^P(R_\G,\Bbbk)
\qquad
\text{for }\, i = 0, 1, \ldots, \nu+1,
$$
that is, the number of $i$-th syzygies in the minimal free resolution of $R_\G$ over $P$.
In particular,
the first Betti number $b_1(R_\G)$ is equal to the number $\mu(I_\G)$  
of minimal generators of  $I_\G$,
i.e., the number of equations of the monomial curve parametrized by $\G$,
while the last Betti number $b_\nu(R_\G)$ is equal to the type of $\G$.
See \cite{MoscarielloSammartano} for a list of open problems on this topic.

It is known that each Betti number is bounded by a function of  the smallest 
generator $g_0$,
since $R_\G$ is a Cohen-Macaulay local ring of multiplicity $g_0$, 
cf. Theorem \ref{ThmValla}.
We are interested in upper bounds in terms of a different invariant of $\G$,
 the  \emph{width}, 
 defined as  $\w(\G) = g_\nu-g_0$.
The motivation for considering this invariant,  introduced in \cite{HerzogStamate},
originates from a result of Vu \cite{Vu} stating that the Betti numbers of the 
``shifted''  semigroups
$ \langle g_0 + j, g_1+j, \ldots, g_\nu+j \rangle $ 
are eventually periodic in  $j$, 
with period equal to $\w(\Gamma)$.
An interesting  consequence of Vu's theorem is  that 
each  $b_i(R_\G)$ takes finitely many values as $\G$ ranges over all semigroups with a fixed width $w$.
This fact is somewhat surprising, as one may expect the Betti numbers to grow arbitrarily large as the multiplicity grows.
Thus, the natural question is: 

\begin{question}\label{QueBoundWidth}
Let $\G$ be a numerical semigroup  and  $R_\G$  its semigroup ring. 
Assuming that the width of $\G$ is  $\w(\G) = w$,
what are the largest possible values of the Betti numbers $b_i(R_\G)$?
\end{question}

This question is wide open;
Vu's proof does not produce any explicit bound, and, in fact, 
no  upper bounds for the Betti numbers are known in terms of the width.
An explicit bound for the first Betti number was proposed by Herzog and Stamate \cite{HerzogStamate}.

\begin{conj}[Herzog-Stamate]\label{ConjectureHS}
Let $\G$ be a numerical semigroup. Then, $$\mu(I_\G) \leq {\w(\G) +1 \choose 2}.$$

\end{conj}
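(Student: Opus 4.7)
The plan is to bound $\mu(I_\Gamma)$ by a combinatorial analysis of the minimal binomial generators of the toric ideal, leveraging the width $w = g_\nu - g_0$ to constrain their possible supports.

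First, I would exploit the classical description of the first Betti number of a toric ideal in terms of graphs attached to $\Gamma$-degrees: for each $n \in \Gamma$, form the graph $G_n$ on the vertex set $\{i : n - g_i \in \Gamma\}$ with edges $\{i,j\}$ whenever $n - g_i - g_j \in \Gamma$. Then
\[
\mu(I_\Gamma) \;=\; \sum_{n\in\Gamma}\bigl(c(n)-1\bigr),
\]
where $c(n)$ is the number of connected components of $G_n$, and only finitely many degrees (the \emph{Betti degrees}) contribute.

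Second, I would attach to every minimal binomial generator $x^a - x^b$ an invariant pair $(\delta_1, \delta_2)$ of distinct elements of $\{0, 1, \ldots, w\}$, built from the shifts $g_i - g_0$ of the generators appearing in its support, for instance the smallest shift on the $a$-side together with the largest shift on the $b$-side. The goal would be to show that this assignment gives an injection into the $\binom{w+1}{2}$-element set of $2$-subsets of $\{0, 1, \ldots, w\}$, thereby yielding the bound. As a sanity check, equality is already attained by the ``arithmetic'' semigroup $\langle g_0, g_0+1,\ldots,g_0+w\rangle$, whose toric ideal in suitable coordinates is exactly that of a rational normal scroll and has $\binom{w+1}{2}$ minimal generators.

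The main obstacle is precisely the injectivity of such an assignment. The Bresinsky family shows that $\mu(I_\Gamma)$ can grow without bound already for $\nu=3$, so many minimal generators can naively share the same pair of extremal shifts, and no invariant depending only on the two ``outer'' generators in the support can suffice. A meaningful injection therefore seems to require a substantially finer combinatorial invariant, presumably tracking chains of intermediate factorizations or data from the Ap\'ery set of $g_0$. I expect this to be the technical heart of the problem, and it is consistent with the fact that the authors only settle the conjecture for all but finitely many widths even in the 4-generated case.
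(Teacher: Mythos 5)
There is no ``paper proof'' to compare against here: the statement you are addressing is Conjecture \ref{ConjectureHS}, which the paper explicitly leaves open. The authors only prove weaker general bounds (Theorem \ref{TheoremGeneralBounds}, with the factor $(3e)^{\sqrt{2\w(\G)}}$ in place of the conjectured polynomial bound) and the conjecture for $4$-generated semigroups with $\w(\G)\geq 40$ (Theorem \ref{Theorem4Generated}), by an entirely different route: they pass to the artinian reduction $\ovR_\G$, compare its Ap\'ery set with that of the interval completion $\tG$ to constrain the Hilbert--Samuel function (Theorem \ref{ThmJG}), and then bound Betti numbers of lexsegment ideals with that Hilbert--Samuel constraint via Bigatti--Hulett--Pardue and a hyperplane-section count. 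So your proposal should be judged as an attempt at an open problem, not as an alternative to an existing argument.

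As a proof, your proposal has a genuine gap, and it is the one you yourself flag. The first step is fine: the fiber-graph description $\mu(I_\G)=\sum_{n}(c(n)-1)$ is standard and correct. But the entire content of the conjecture would lie in the second step, the injection from minimal generators into the $\binom{\w(\G)+1}{2}$ two-element subsets of $\{0,\ldots,\w(\G)\}$, and no such assignment is constructed; you only state a desideratum. Moreover, as you note, Bresinsky's examples already rule out any invariant built from the two ``outer'' shifts in the support, and it is not clear that any invariant extracted from a single binomial (rather than from global data such as the full Ap\'ery structure) can work, since many minimal generators in the same or nearby $\G$-degrees can share all such local data. A smaller inaccuracy: your sanity check overstates sharpness. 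Equality in the conjectured bound is attained by $\langle m,m+1,\ldots,2m-1\rangle$, but not by arbitrary interval semigroups $\langle g_0,g_0+1,\ldots,g_0+\w(\G)\rangle$; for instance $\langle 4,5,6\rangle$ has $\mu(I_\G)=2<\binom{3}{2}$. So the scroll heuristic does not by itself suggest that pairs of shifts are the right indexing set. In short, the proposal identifies a plausible framework (graded Betti degrees and fiber graphs) but the key combinatorial injection --- the heart of the conjecture --- is missing, and the paper's own partial results go through Hilbert--Samuel and lexsegment estimates instead, which so far yield only exponential-in-$\sqrt{\w(\G)}$ bounds in general.
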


Conjecture \ref{ConjectureHS} is in fact implied by a stronger
 \cite[Conjecture 2.1]{HerzogStamate}, which predicts the same inequality  for the defining ideal of the tangent cone of $R_\G$.
No progress has been made on this conjecture since it was stated.
We  formulate here a natural extension to all the Betti numbers.

\begin{conj}\label{ConjectureBetti}
Let $\G$ be a numerical semigroup. Then, 
$b_i(R_\G) \leq i{\w(\G) +1 \choose i+1}$ for all $i \geq 1$.
\end{conj}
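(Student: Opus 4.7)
My plan starts from the numerical observation that the bound $i\binom{\w(\G)+1}{i+1}$ equals the $i$-th total Betti number of the Eagon--Northcott complex resolving the ideal of $2 \times 2$ minors of a generic $2 \times (w+1)$ matrix. Conjecture \ref{ConjectureBetti} therefore predicts that the Eagon--Northcott numbers provide a universal upper bound for the syzygies of any width-$w$ semigroup ring, extending Conjecture \ref{ConjectureHS} at $i = 1$ and reflecting the expectation that the ``densest'' semigroups of given width saturate the bound.

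The strategy proceeds in three steps. First, reduce to the tangent cone: Betti numbers are upper-semicontinuous under the flat degeneration $R_\G \rightsquigarrow G_\G = \gr_\mm(R_\G) = P/\gr(I_\G)$, so $b_i(R_\G) \le b_i(G_\G)$, reducing the problem to bounding the graded Betti numbers of the standard graded Cohen--Macaulay ring $G_\G$, whose Hilbert function is encoded by the Apéry set $\Ap(\G, g_0)$. Second, lift to the larger polynomial ring $\tilde P = P[[z_1, \ldots, z_k]]$ with $k = w - \nu$, where the $z_j$ correspond to the integers in $[g_0, g_\nu] \setminus \G$; the Koszul identity for the regular sequence $(z_1, \ldots, z_k)$ on $\tilde P/(I_\G \cdot \tilde P)$ gives
$$
b_i^{\tilde P}(R_\G) \;=\; \sum_{j + l = i} b_j(R_\G)\binom{k}{l} \;\ge\; b_i(R_\G),
$$
so it suffices to prove $b_i^{\tilde P}(R_\G) \le i\binom{w+1}{i+1}$. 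Third, compare with the semigroup ring of the complete semigroup $\tG = \langle g_0, g_0+1, \ldots, g_0+w\rangle$: there is a natural surjection $R_{\tG} \twoheadrightarrow R_\G$ obtained by sending the missing variables to zero, and one would aim to prove the composite inequality $b_i^{\tilde P}(R_\G) \le b_i(R_{\tG}) \le i\binom{w+1}{i+1}$, either by direct specialization or by interpolating both sides in a common flat family over $\mathbb{A}^1$.

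The principal obstacle is step three. A surjection of cyclic $\tilde P$-modules does not by itself yield a Betti number inequality, and the missing variables $z_j$ fail to be a regular sequence on $R_{\tG}$ in general, ruling out naive specialization. Moreover, $R_{\tG}$ itself does not always saturate the Eagon--Northcott bound, so the second inequality requires its own justification. A possible remedy is to combine a direct flat-family construction with Vu's shift-periodicity: since the Betti numbers of the shifts $\langle g_0+j, g_1+j, \ldots, g_\nu+j\rangle$ are periodic in $j$ for $j \gg 0$, one may assume $g_0$ is arbitrarily large, at which point one can attempt to degenerate a ``universal'' $2 \times (w+1)$ minor ideal onto $I_\G \cdot \tilde P + (z_1, \ldots, z_k)$ and conclude via upper semicontinuity. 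A complementary purely combinatorial angle is to bound the multigraded Tor modules of $G_\G$ directly using the $\G$-order function on $\Ap(\G, g_0)$, whose maximum value is at most $w$; this constrains the length and spread of the minimal free resolution. Whichever path one takes, the $i = 1$ instance is already Conjecture \ref{ConjectureHS}, open for over a decade, so the low-homological-degree comparison is where the genuine difficulty resides.
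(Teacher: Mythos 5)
The statement you are attempting is Conjecture \ref{ConjectureBetti}; the paper does not prove it and explicitly leaves it open (even its $i=1$ case, Conjecture \ref{ConjectureHS}, is open), proving only the much weaker bound ${\w(\G) \choose i}(3e)^{\sqrt{2\w(\G)}}$ in Theorem \ref{TheoremGeneralBounds} and the 4-generated case for $\w(\G)\geq 40$ in Theorem \ref{Theorem4Generated}. Your text is accordingly a strategy sketch rather than a proof, and its load-bearing step (your step three) is where it breaks down concretely. There is no natural surjection $R_{\tG}\twoheadrightarrow R_\G$ obtained by ``sending the missing variables to zero'': setting those variables to zero does not map $I_{\tG}$ into $I_\G$, since a binomial $y_hy_i-y_jy_k\in I_{\tG}$ whose index $j$ corresponds to a gap of $\G$ would map to the monomial $x_hx_i$, which cannot lie in the prime toric ideal $I_\G$. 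The natural map between the semigroup rings goes the other way, as an inclusion $R_\G\hookrightarrow R_{\tG}$ induced by $\G\subseteq\tG$, and neither surjections nor inclusions of cyclic modules yield Betti number inequalities, as you yourself note. The proposed remedies do not close this gap: Vu's periodicity is non-effective (this is exactly why no explicit bound in the width was previously known), a degeneration of a generic $2\times(w+1)$ determinantal ideal onto $I_\G\cdot\tilde P+(z_1,\ldots,z_k)$ is asserted but not constructed and would in particular prove the open Herzog--Stamate conjecture, and the inequality $b_i(R_{\tG})\leq i{w+1\choose i+1}$ is also left unjustified.

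For comparison, the paper carries out a rigorous version of the comparison with the interval completion $\tG$, but only after several reductions: it passes to the artinian reduction $\ovR_\G=R_\G/(t^m)$, then to the associated graded ring and to a revlex initial ideal $J_\G$, and compares Apéry sets and Hilbert--Samuel functions of $\G$ and $\tG$ (Theorem \ref{ThmJG}); it then bounds Betti numbers of any ideal satisfying the resulting Hilbert--Samuel constraint via lexsegment ideals (Theorem \ref{ThmBHP}) and a hyperplane-section estimate (Lemma \ref{LemmaSyzygiesHyperplane}, Proposition \ref{PropEstimateHyperplaneSection}). Even with all of this, the outcome falls well short of the conjectured bound $i{\w(\G)+1\choose i+1}$, which should calibrate how far your sketch is from a proof: identifying the conjectural extremal value with Eagon--Northcott numbers and reducing to a comparison with $\tG$ is a reasonable heuristic, but the genuine difficulty---an effective Betti-number comparison across the passage from $\G$ to $\tG$---is precisely what remains missing.
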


Conjecture \ref{ConjectureHS} is the case $i = 1$ of Conjecture \ref{ConjectureBetti}.
If true, these bounds are sharp, since they are attained, for example, 
by  semigroups of the form $\langle m, m+1, \ldots, 2m-1\rangle$.

In this paper, we prove the following upper bounds.

\begin{thm}\label{TheoremGeneralBounds}
Let $\G$ be a numerical semigroup. 
Then, for all $i \geq 1$ we have
$$
b_i(R_\G) \leq {\w(\G) \choose i} (3e)^{\sqrt{2\w(\G)}}.
$$
where $e$ denotes the base of natural logarithms.
\end{thm}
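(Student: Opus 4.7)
The plan is to combine the standard combinatorial formula for multigraded Betti numbers with a careful enumeration of the ``Betti degrees'' of $I_\G$. Recall that for each $\gamma \in \G$ we have $b_{i,\gamma}(R_\G) = \dim_\Bbbk \tilde{H}_{i-1}(\Delta_\gamma; \Bbbk)$, where $\Delta_\gamma$ is the simplicial complex on $\{0,1,\ldots,\nu\}$ whose faces are the subsets $F$ with $\gamma - \sum_{j \in F} g_j \in \G$. Summing yields
$$
b_i(R_\G) = \sum_{\gamma \in \G} \dim_\Bbbk \tilde{H}_{i-1}(\Delta_\gamma; \Bbbk),
$$
so the proof splits naturally into bounding the size of each nonzero term and counting the Betti degrees.

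For the per-degree bound, the crucial observation is that $g_1 - g_0, \ldots, g_\nu - g_0$ are distinct elements of $\{1,\ldots,\w(\G)\}$, hence $\nu \leq \w(\G)$. A crude face count gives $\dim_\Bbbk \tilde H_{i-1}(\Delta_\gamma) \leq \binom{\w(\G)+1}{i}$. A small refinement---for instance using that whenever $\gamma$ is a Betti degree in positive homological degree the link of a distinguished vertex is contractible---should yield the sharper bound $\binom{\w(\G)}{i}$.

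The main step is to prove that the number of Betti degrees is at most $(3e)^{\sqrt{2\w(\G)}}$. My strategy would be to encode a Betti degree $\gamma$ by an ``essential'' pair of factorizations $\gamma = \sum a_j g_j = \sum a'_j g_j$ arising from disjoint chains in $\Delta_\gamma$ that realize a nontrivial homology class, and then to normalize this pair so that the weighted total $\sum_j (a_j + a'_j)(g_j - g_0)$ is bounded by a fixed multiple of $\w(\G)$. Such a normalization would reduce the enumeration of Betti degrees to a count of integer partitions of size $O(\w(\G))$, and the classical Hardy--Ramanujan asymptotic $p(n) \leq e^{\pi \sqrt{2n/3}}$ then produces the sub-exponential factor; the constant $3e$ should emerge after tracking the polynomial choices inherent in the normalization.

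The principal obstacle is the normalization step: a priori the exponents $a_j$ can be arbitrarily large compared to $\w(\G)$, and one must use the nontriviality of $\tilde H_{i-1}(\Delta_\gamma)$ to force a minimal pair of factorizations with bounded weighted exponents. I expect this to proceed via an iterative cancellation on pairs of factorizations that preserves the homology class, with $\w(\G)$ serving as the budget for the residual support. Once this is established, the theorem follows by multiplying the per-degree bound by the count of Betti degrees.
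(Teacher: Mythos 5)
Your reduction of the problem to (a) a bound on each graded piece and (b) a count of Betti degrees is legitimate in principle: the formula $b_i(R_\G)=\sum_{\gamma}\dim_\Bbbk\tilde{H}_{i-1}(\Delta_\gamma;\Bbbk)$ is the standard squarefree divisor complex computation, and $\nu\leq\w(\G)$ gives the crude estimate $\dim_\Bbbk\tilde{H}_{i-1}(\Delta_\gamma;\Bbbk)\leq\binom{\w(\G)+1}{i}$. But both halves are left as sketches, and the half that carries all the weight --- bounding the number of Betti degrees by $(3e)^{\sqrt{2\w(\G)}}$ --- rests on a normalization claim that fails on concrete examples. Take the interval semigroup $\tG=\langle m,m+1,\ldots,m+w\rangle$ with $m\gg w$ and write $m=qw+r$: its toric ideal has minimal binomial generators such as $y_w^{q}y_{i+r}-y_0^{q+1}y_i$, whose Betti degree is $\gamma=(q+2)m+i$. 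Every factorization of $\gamma$ has total degree roughly $q\approx m/w$; the factorizations avoiding $g_0=m$ have weighted size $\sum_j a_j(g_j-g_0)$ of order $m$, while the factorizations of small weighted size use $g_0$ with exponent about $m/w$, which is unbounded in terms of $w$. So the proposed budget $\sum_j(a_j+a'_j)(g_j-g_0)=O(\w(\G))$ cannot hold for an essential pair, and the passage to integer partitions of size $O(\w(\G))$ --- the step that is supposed to produce the subexponential factor via Hardy--Ramanujan --- does not go through as stated. One could try to discard the $g_0$-exponent and encode only the positive-weight parts, but then one must control how many distinct Betti degrees (and how much homology) map to the same partition, and that is essentially the entire difficulty; nothing in the proposal addresses it. The refinement of the per-degree bound from $\binom{\w(\G)+1}{i}$ to $\binom{\w(\G)}{i}$ via contractibility of a link is likewise asserted rather than proved (a minor issue compared with the counting step).

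For comparison, the paper never counts Betti degrees: it bounds $b_i(R_\G)$ by the Betti numbers of the revlex initial ideal $J_\G$ of the tangent cone of the artinian reduction, uses the interval completion $\tG$ to prove the Hilbert--Samuel constraint $\HS(Q/J_\G,d)\leq 1+d\w(\G)$, and then applies Bigatti--Hulett--Pardue together with a hyperplane-section length estimate for lexsegment ideals; the factor $(3e)^{\sqrt{2\w(\G)}}$ arises there from an explicit binomial coefficient $\binom{C+D-2}{C-1}$ with $C,D\approx\sqrt{\w(\G)}$, not from partition asymptotics. If you want to salvage your multigraded approach, the missing ingredient is precisely a structural statement limiting which $\gamma$ can support homology, and at present your argument assumes rather than proves it.
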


The bounds of Theorem \ref{TheoremGeneralBounds} are quite far from the expected ones, hence, they are likely far from being sharp.
However, they are the first known bounds  to depend only on the width of the semigroup,
thus, Theorem \ref{TheoremGeneralBounds} is a first step towards answering Question \ref{QueBoundWidth}.

In order to prove Theorem \ref{TheoremGeneralBounds}, we investigate the Hilbert-Samuel function of an artinian reduction of $R_\G$.
By comparing the Ap\'ery set of $\G$ to that of its interval completion $\tG$, 
in Section  \ref{SectionMonomialIdeals} we show that the knowledge of the width of the semigroup imposes several constraints on the Hilbert-Samuel function.
Then, in Section \ref{SectionSyzygies} we obtain the desired bounds by 
estimating the size of the hyperplane section of a lexsegment ideal satisfying these constraints.

A classical result of Herzog \cite{Herzog} states that if $\G$ has at most three generators,
then $b_1(R_\G) \leq 3$ and, as a consequence, $b_2(R_\G) \leq 2$.
By contrast, Bresinsky \cite{Bresinsky} proved that if $\G$ has four or more generators, then the Betti numbers can be arbitrarily large.
Thus, the first interesting instance of the problem of bounding the Betti numbers is the case of 4-generated semigroups.
In Section \ref{SectionFourGenerated}, we refine the analysis  to settle  Conjectures \ref{ConjectureHS} and \ref{ConjectureBetti}
in this case for all but finitely many values of the width.

\begin{thm}\label{Theorem4Generated}
Let $\G$ be a 4-generated numerical semigroup. 
Assume that $\w(\G) \geq 40$.
Then, we have
$b_i(R_\G) \leq i{\w(\G) +1 \choose i+1}$ for all $i \geq 1$.
\end{thm}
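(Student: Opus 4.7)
The plan is to follow the general strategy of Theorem~\ref{TheoremGeneralBounds}, with every estimate sharpened to exploit the assumption that $\G$ has exactly four generators.

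First, I would reduce to an artinian quotient $A = R_\G/(\ell)$ by a generic linear form $\ell$ that is a non-zerodivisor on $R_\G$; then the Betti numbers of $R_\G$ over $P$ agree with those of $A$ over $P/(\ell)$, so it suffices to bound $b_i(A)$. Because $\nu = 3$, the embedding dimension of $A$ is at most $3$, so its Hilbert function $\mathbf{h} = (h_0, h_1, \ldots, h_s)$ satisfies $h_0 = 1$ and $h_1 \leq 3$, with $\sum_j h_j = g_0$.

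Second, I would sharpen the Apéry-set analysis of Section~\ref{SectionMonomialIdeals} under the hypothesis $\nu = 3$. The Apéry set of $\G$ with respect to $g_0$ partitions into at most three chains indexed by residues modulo $g_0$, and the width constraint $w = g_3 - g_0$ tightly bounds how far these chains can extend. This yields precise structural information on $\mathbf{h}$: the interior values are at most $3$, the length $s$ is controlled linearly in $w$, and $\mathbf{h}$ essentially has the shape $(1, 3, 3, \ldots, 3, *, \ldots, *)$ with a short, well-controlled tail.

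Third, I would invoke the lexsegment comparison of Section~\ref{SectionSyzygies}: the Betti numbers of $A$ are dominated by those of the lex ideal $L$ with Hilbert function $\mathbf{h}$ in a polynomial ring in three variables. In three variables, the Betti numbers of $L$ admit an explicit combinatorial expression in terms of the $h_j$'s, so a direct computation using the structural input from the previous step produces closed-form upper bounds for $b_1(A)$, $b_2(A)$, and $b_3(A)$, each polynomial in $w$ of degree $i+1$ with the expected leading coefficient.

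The main obstacle will be the final comparison: verifying that the resulting explicit bounds are dominated by $i\binom{w+1}{i+1}$ for each $i \in \{1,2,3\}$. The leading terms will match, but lower-order corrections coming from the tail of $\mathbf{h}$ and from the case analysis over which residue class of $g_0$ realizes the extremal Apéry chain can cause the inequality to fail for small $w$. The threshold $\w(\G) \geq 40$ is expected to emerge exactly as the point beyond which every such correction is absorbed; finitely many small widths must therefore be excluded, which is what the statement allows.
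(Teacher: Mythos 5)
Your overall pipeline (artinian reduction, passage to a three-variable graded/lex model, explicit Betti bounds, comparison with $i\binom{w+1}{i+1}$) has the same skeleton as the paper's argument, but the structural input you feed into it in your second step is incorrect, and that is exactly where the real work lies. After Corollary~\ref{CorLargeWidth} the only case remaining is $\w(\G)\leq m(\G)-2$, where the multiplicity $m=g_0$ can be arbitrarily large compared with $w$. In that regime your claims about the Hilbert function $\mathbf{h}$ of the artinian reduction fail: the values $h_d$ need not be at most $3$ (in three variables $h_2$ can already be $6$), and the socle degree is not linear in $w$ --- for the interval completion $\tG=\langle m,m+1,\ldots,m+w\rangle$ it is roughly $m/w$. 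Indeed your description is internally inconsistent: a vector $(1,3,\ldots,3,*,\ldots,*)$ whose length is linear in $w$ cannot sum to $g_0$ when $g_0\gg w$. (Also, the Ap\'ery set does not split into three chains indexed by residues mod $g_0$; it has exactly one element per residue class.) The correct, and much weaker, information the paper extracts from the Ap\'ery set --- by comparing $\G$ with its interval completion (Proposition~\ref{PropHilbertSamuelInequality}, Theorem~\ref{ThmJG}) --- is only the Hilbert--Samuel inequality $\HS(\cdot,d)\leq 1+dw$; no pointwise bound on $\HF$ of the kind you assert is available.

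As a consequence, your third step has no engine, and your anticipated endgame is also not what happens. The paper first passes to the associated graded ring and a revlex initial ideal (needed because Theorem~\ref{ThmBHP} applies to homogeneous ideals; reducing by a generic linear form and quoting the lex comparison for the local ring $A$ elides this), and then, for the lex ideal $L\subseteq\Bbbk[x,y,z]$, uses $\HS(S/L,d)\leq 1+dw$ at $d=\alpha-1$ and $d=\beta-1$ to bound the least pure powers $x^\alpha,y^\beta\in L$ by $\alpha=O(\sqrt{w})$ and $\beta=O(w)$; this bounds the colength of the hyperplane section $\ell(\wS/\wL)=O(\alpha\beta)$, and Lemma~\ref{LemmaSyzygiesHyperplane} then gives $b_0(L),b_1(L)=O(w^{3/2})$, far below $\binom{w+1}{2}$ and $2\binom{w+1}{3}$. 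So, contrary to your prediction, the leading terms do not match at all, and the threshold $w\geq 40$ arises from a finite verification over the admissible pairs $(\alpha,\beta)$ for $40\leq w\leq 99$, not from absorbing lower-order corrections to a matching leading coefficient. Without a correct replacement for your structural claim --- e.g.\ the Hilbert--Samuel bound via the interval completion and a mechanism such as bounding $\alpha$ and $\beta$ --- the ``direct computation'' you invoke cannot be carried out, so the proposal as written has a genuine gap.
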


\section{Preliminaries}\label{SectionPreliminaries}

In this section, we fix the notation and state some preliminary results.
We refer to \cite{RosalesGarciaSanchez} for details on numerical semigroups,
and to \cite{HerzogHibi} for  free resolutions and monomial ideals.

Throughout this paper, the symbol $\Bbbk$ denotes a field.
Let $A$ be a finitely generated standard graded $\Bbbk$-algebra,
that is, an $\NN$-graded $\Bbbk$-algebra generated by finitely many elements of degree 1.
We denote  graded components by $[\cdot]_d$.
The Hilbert function of a graded $A$-module $M$  is  $\HF(M,d) = \dim_\Bbbk [M]_d$.
The Hilbert-Samuel function of $A$ is  $\HS(A,d) = \sum_{i=0}^d \HF(A,i)$.

If $A$ is a local ring with residue field $\Bbbk$, respectively,
 a finitely generated standard graded $\Bbbk$-algebra,
we denote by $\mm_A$ its unique maximal ideal, respectively, its unique maximal homogeneous ideal.
If $M$ is a finite $A$-module, respectively, a finite graded $A$-module, 
the (total) Betti numbers of $M$ over $A$ are defined as $b_i^A(M) = \dim_\Bbbk \mathrm{Tor}_i^A(M,\Bbbk)$.
The minimal number of generators of $M$ is denoted by $\mu(M)$, and we have $ \mu(M)=b_0^A(M)$.
Additionally, in the graded case we can define the graded Betti numbers of $M$ over $A$ as  
$b_{i,j}^A(M) = \dim_\Bbbk [\mathrm{Tor}_i^A(M,\Bbbk)]_j$.
If $M$ has finite length, we denote its length by $\ell(M)$.
In our context, the length always coincides with  the vector space dimension over $\Bbbk$.

If $A$ is a Noetherian local ring,
we denote by $\gr(A) = \bigoplus_{s\geq 0 } {\mm_A^s}/{\mm_A^{s+1}}$ its associated graded ring with respect to the maximal ideal, also known as the tangent cone.
It is a standard graded $\Bbbk$-algebra.

A numerical semigroup is a cofinite additive submonoid $\G \subseteq \NN$.
Every numerical semigroup has a unique minimal set of generators, 
and we write $\G = \langle g_0, g_1, \ldots, g_\nu \rangle$ to denote them.
We always write the generators in increasing order $g_0 < g_1 < \cdots < g_\nu$.
The smallest generator $g_0$ of $\G$ is  called  the  multiplicity of $\G$,
and we denote it by $m(\G)$ or simply by $m$.
It is well-known that $\nu+1 \leq m$.
Every nonzero additive submonoid of $\NN$ is isomorphic to a numerical semigroup, obtained by scaling all the elements by their greatest common divisor.

The semigroup ring of $\G$ is $R_\G = \Bbbk\llbracket t^\gamma \, : \, \gamma \in \G\rrbracket = 
\Bbbk\llbracket t^{m}, t^{g_1}, \ldots, t^{g_\nu} \rrbracket \subseteq \Bbbk\llbracket t \rrbracket$.
It is a one-dimensional Cohen-Macaulay local domain of multiplicity  $m$.
The ideal $(t^{m}) \subseteq R_\G$ is the unique monomial minimal reduction of the maximal ideal.
We denote the quotient by  $\overline{R}_\G = R_\G/(t^m)$,
which is an artinian local $\Bbbk$-algebra with  $\ell(\overline{R}_\G)=m$.
The ring $\overline{R}_\G$ has a monomial $\Bbbk$-basis $\{t^\gamma \, : \, \gamma \in \Ap(\G)\}$,
where $\Ap(\G) = \{\gamma \in \G \, : \, \gamma-m \notin \G\}$ is called the 
Apéry set of $\G$.

Let $P = \Bbbk \llbracket x_0, x_1, \ldots, x_\nu \rrbracket$  and consider the  presentation $\pi_\G : P \to R_\G$ defined by $\pi_\G(x_i)=t^{g_i}$.
We have $R_\G = P / I_\G$, where $I_\G = \ker(\pi_\G)$ is  the toric ideal of the semigroup $\G$
$$
I_\G = \left( x_0^{\alpha_0}\cdots x_\nu^{\alpha_\nu}-x_0^{\beta_0}\cdots x_\nu^{\beta_\nu} \, \mid \, 
\sum_{i=0}^\nu \alpha_i g_i = \sum_{i=0}^\nu \beta_i g_i\right).
$$
Let $\overline{P}=P/(x_0) \cong \Bbbk \llbracket x_1, \ldots, x_\nu \rrbracket,$
then we have the presentation $\overline{R}_\G = \overline{P}/\overline{I}_\G$,
where $\overline{I}_\G = \frac{I_\G+(x_0)}{(x_0)}$. 

Let $Q = \Bbbk[ x_1, \ldots, x_\nu]\cong \gr(\overline{P})$,
then we have a presentation 
$\gr(\overline{R}_\G) = Q/(\overline{I}_\G)^*$, 
where $(\overline{I}_\G)^*$ is the ideal of initial forms of $\overline{I}_\G$,
which is  generated by binomials and monomials
\begin{align*}
(\overline{I}_\G)^* = & \left(
x_1^{\alpha_1}\cdots x_\nu^{\alpha_\nu}-x_1^{\beta_1}\cdots x_\nu^{\beta_\nu}
\, \mid \, \sum_{i=1}^\nu \alpha_i g_i = \sum_{i=1}^\nu \beta_i g_i  \in \Ap(\G)
\,\text{ and }\,
\sum_{i=1}^\nu \alpha_i = \sum_{i=1}^\nu \beta_i
\right)\\
& +
\left(
x_1^{\alpha_1}\cdots x_\nu^{\alpha_\nu}
\, \mid \, 
\sum_{i=1}^\nu \alpha_i g_i = \sum_{i=1}^\nu \beta_i g_i \in \Ap(\G), \sum_{i=1}^\nu \alpha_i < \sum_{i=1}^\nu \beta_i
\text{ for some } \beta_1, \ldots, \beta_\nu \in \NN
\right)\\
&+
\left(
x_1^{\alpha_1}\cdots x_\nu^{\alpha_\nu}
\, \mid \, 
\sum_{i=1}^\nu \alpha_i g_i \notin \Ap(\G)
\right).
\end{align*}

The  presentations $R_\G = P /I_\G$, $\overline{R}_\G = \overline{P}/\overline{I}_\G$, 
and $\gr(\overline{R}_\G) = Q/(\overline{I}_\G)^*$ 
are called minimal regular presentations,
since they are  of the form $A= B/I$ where $B$ is a local or graded regular ring and $I \subseteq \mm_B^2$.
In these cases,
the Betti numbers of $A$ over $B$ are simply called the Betti numbers of $A$,  and denoted by $b_i(A) = b_i^B(A)$.
Moreover, we have $b_i^B(A) = b_{i-1}^B(I)$ for $i >0$.
Since $t^m$ is a non-zerodivisor on $R_\G$, we have $b_i(R_\G) = b_i(\overline{R}_\G)$ for every $i$.
Moreover, it is well-known that 
$b_i(\overline{R}_\G) \leq b_i(\gr(\overline{R}_\G))$ for every $i$.

We conclude this section by stating two theorems that provide 
upper bounds for the Betti numbers of an ideal in terms of its numerical invariants.

The first theorem identifies an  ideal with the largest graded Betti numbers among all the homogeneous ideals with a fixed Hilbert function.
Let  $S = \Bbbk[x_1, \ldots, x_n]$ be a polynomial ring.
A lexsegment ideal $L \subseteq S$ is  a monomial ideal such that, for each $d$, the  graded component $[L]_d$ is spanned  by the first $\dim_\Bbbk[L]_d$ monomials of  $[S]_d$,
 where monomials are ordered lexicographically.
For any homogeneous ideal $I \subseteq S$, there exists a unique lexsegment ideal,
denoted by $\Lex(I)\subseteq S$,
 such that $\HF(I) = \HF(\Lex(I))$,
cf. \cite[Theorem 6.3.1]{HerzogHibi}.

\begin{thm}[Bigatti-Hulett-Pardue]\label{ThmBHP}
Let  $S = \Bbbk[x_1, \ldots, x_n]$ and let $I \subseteq S $ be a homogeneous ideal.
We have $b_{i,j}(I) \leq b_{i,j}(\Lex(I))$ for all $i,j$.
In particular, $b_{i}(I) \leq b_{i}(\Lex(I))$ for all $i$.
\end{thm}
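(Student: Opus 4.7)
The plan is a two-stage reduction: first, replace $I$ by a monomial ideal with the same Hilbert function and graded Betti numbers at least as large; second, show that among monomial ideals with that Hilbert function, the lexsegment ideal is extremal in every bigraded Betti position.

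For the first stage I would pass to the generic initial ideal $\mathrm{gin}(I)$ with respect to the reverse lexicographic order. Concretely, $\mathrm{gin}(I)$ arises as the initial ideal of $g \cdot I$ under a generic change of coordinates $g \in \mathrm{GL}_n(\Bbbk)$, producing a one-parameter flat family from $S/I$ to $S/\mathrm{gin}(I)$. Since the Hilbert function is constant along the family and graded Betti numbers are upper semicontinuous under flat degeneration, one obtains $b_{i,j}(I) \le b_{i,j}(\mathrm{gin}(I))$. By the classical results of Galligo, Bayer--Stillman, and Pardue, $\mathrm{gin}(I)$ is moreover Borel-fixed.

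The combinatorial second stage is the core of the argument. For a strongly stable monomial ideal $J \subseteq S$, the Eliahou--Kervaire resolution provides the explicit formula
\[
b_{i,i+d}(J) \;=\; \sum_{u \in G(J),\, \deg u = d} \binom{\max(u)-1}{i},
\]
where $G(J)$ denotes the minimal monomial generating set and $\max(u)$ is the largest index of a variable dividing $u$. Thus the entire Betti table is encoded by the multiset of pairs $(\deg u,\max(u))$. I would then verify by direct combinatorics, using elementary shifting operations that preserve the Hilbert function while weakly pushing each $\max(u)$ upward, that among strongly stable ideals of a prescribed Hilbert function the lexsegment $\Lex(I)$ is the unique one in which every $\max(u)$ is as large as possible, and hence it maximizes each Eliahou--Kervaire sum in the formula above.

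The main obstacle is the positive characteristic case, where $\mathrm{gin}(I)$ is only Borel-fixed and need not be strongly stable, so Eliahou--Kervaire does not apply to it directly. This is precisely where Pardue's contribution enters: one must either further degenerate a Borel-fixed ideal to a strongly stable one without decreasing Betti numbers, or extend the combinatorial comparison argument to all Borel-fixed ideals. Showing that this additional step is monotone on graded Betti numbers is the most delicate technical point of the proof.
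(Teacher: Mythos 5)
Your outline is the standard Bigatti--Hulett--Pardue argument (revlex gin via flat degeneration and semicontinuity, the Eliahou--Kervaire formula for strongly stable ideals, then the combinatorial comparison showing the lexsegment ideal maximizes each graded Betti number), which is exactly the proof the paper defers to: the paper gives no argument of its own, citing \cite{HerzogHibi} for characteristic zero and \cite{Pardue} for arbitrary characteristic. The positive-characteristic difficulty you flag (Borel-fixed need not be strongly stable, so one must pass from a Borel-fixed ideal toward the lex ideal without decreasing Betti numbers) is precisely Pardue's contribution, so your sketch matches the cited proofs in both cases.
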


\begin{proof}
See \cite[Theorem 7.3.1]{HerzogHibi} for the case $\mathrm{char}(\Bbbk) = 0$,
\cite[Theorem 31]{Pardue} for the general case.
\end{proof}

The second theorem identifies an ideal with the largest Betti numbers among all ideals defining Cohen-Macaulay quotient rings with a fixed multiplicity.
In particular, this applies to ideals defining artinian  rings with a fixed length.
An ideal $I \subseteq S $ is called very compressed if  $\mm_S^{s+1} \subseteq I \subseteq \mm_S^s$ for some $s \in \NN$.

\begin{thm}[Valla]\label{ThmValla}
Let $A$ be a regular local ring (resp., a polynomial ring), 
and let $I\subseteq A$ be an ideal (resp., a homogeneous ideal) 
of codimension $n$ such that  $A/I$ is a Cohen-Macaulay ring of  multiplicity $m$.
Let $S = \Bbbk[x_1, \ldots, x_n]$ and let $\mathcal{C}=\mathcal{C}(m,n)\subseteq S$ be the unique 
very compressed lexsegment ideal such that $S/\mathcal{C}$ is an artinian $\Bbbk$-algebra with $\ell(S/\mathcal{C})=m$.
Then, 
$
b^A_i(A/I) \leq b^S_i(S/\mathcal{C})
$
for every $i$.
\end{thm}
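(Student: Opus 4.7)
The plan is to perform two standard reductions, apply Theorem \ref{ThmBHP}, and then verify that the very compressed lexsegment ideal $\mathcal{C}(m,n)$ has the largest Betti numbers among all Artinian lexsegment ideals $L \subseteq S$ with $\ell(S/L)=m$.

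For the reductions, since $A/I$ is Cohen–Macaulay of codimension $n$, I quotient by a regular sequence on $A/I$ of length $\dim A - n$ inside $\mm_A$, which does not change any Betti number, thereby reducing to the case where $A/I$ is Artinian of length $m$ over a regular local ring $A$ of dimension $n$. Passing to the associated graded ring gives $\gr(A) \cong S = \Bbbk[x_1, \ldots, x_n]$, $\gr(A/I) = S/I^*$ of length $m$, and the standard inequality $b_i^A(A/I) \leq b_i^S(S/I^*)$. In the polynomial-ring case only the first reduction is needed. Applying Theorem \ref{ThmBHP} to $I^*$ yields $b_i(S/I^*) \leq b_i(S/L)$, where $L = \Lex(I^*) \subseteq S$ is an Artinian lexsegment with $\ell(S/L)=m$.

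The remaining and core step is to show $b_i(S/L) \leq b_i^S(S/\mathcal{C}(m,n))$ for every such $L$. The Hilbert function of $S/\mathcal{C}(m,n)$ is the componentwise maximum among Hilbert functions of Artinian quotients of $S$ with colength $m$: it fills $[S]_d$ completely for $d<s$, takes a residual value in degree $s$, and vanishes thereafter, where $s$ is determined by $m$. By the Eliahou–Kervaire formula, the Betti numbers of a lexsegment ideal are sums of binomial coefficients of the form $\binom{\max(u)-1}{i}$ taken over the minimal monomial generators $u$. Hence the desired inequality becomes a combinatorial comparison between the minimal generator sets of $L$ and of $\mathcal{C}(m,n)$.

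The main obstacle is precisely this extremality claim. The natural strategy is to define a \emph{compression move} that replaces a generator of $L$ in a higher degree by one in a lower degree, while preserving the lex-ideal property and the colength, and to prove that each such move weakly increases every Betti number. Iterating transforms $L$ into $\mathcal{C}(m,n)$, giving the bound. Verifying monotonicity under a single move, by tracking how the statistics $\max(u)$ of the generator set evolve, is the substantive technical content; once established, it completes the proof.
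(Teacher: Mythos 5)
Your two preliminary reductions are fine in outline and are the standard route: cut down by a regular sequence of length $\dim(A/I)$ whose image generates a minimal reduction of the maximal ideal of $A/I$ (this needs an infinite residue field, a harmless flat base change, so that the Artinian quotient has length exactly equal to the multiplicity $m$), then pass to the associated graded ring to get $b_i^A(A/I)\leq b_i^S(S/I^*)$ with $\ell(S/I^*)=m$, and finally invoke Theorem~\ref{ThmBHP} to replace $I^*$ by $L=\Lex(I^*)$. This correctly reduces the theorem to the assertion: for every Artinian lexsegment ideal $L\subseteq S=\Bbbk[x_1,\ldots,x_n]$ with $\ell(S/L)=m$ one has $b_i(S/L)\leq b_i(S/\mathcal{C}(m,n))$ for all $i$. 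Note also that the paper does not reprove the theorem at all: its ``proof'' is a citation to \cite[Theorem 4]{Valla} (which assumes $I\subseteq\mm_A^2$) and to \cite[Theorem 3.7]{CavigliaSammartano} (which removes that hypothesis), so your write-up has to carry the full weight of the argument itself.

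The genuine gap is that this final assertion, which is precisely the content of Valla's theorem, is not proved: you propose a ``compression move'' and explicitly defer ``verifying monotonicity under a single move'' as the substantive technical content. The move is not even defined precisely, and the difficulty is real: replacing a higher-degree generator by a lower-degree one while keeping the colength fixed changes the Hilbert function in at least two degrees, globally reshuffles which monomials are minimal generators of the lex ideal in the intermediate degrees, and it is not clear that each intermediate ideal is again a lexsegment ideal of the same colength, nor that every Betti number weakly increases at every step --- the Eliahou--Kervaire comparison is not a generator-by-generator matching once the generating sets change in this way. So the proof stops exactly where the theorem begins. To close the gap you could either compare $L$ with $\mathcal{C}(m,n)$ directly via the Eliahou--Kervaire formula (essentially Valla's original computation), or induct on $n$ using the hyperplane-section decomposition of lexsegment ideals that the paper itself uses in Lemma~\ref{LemmaSyzygiesHyperplane}, namely $b_i^S(L)=b_i^{\wS}(\wL)+\ell(\wS/\wL)\binom{n-1}{i}$, which is the strategy of the cited proof in \cite{CavigliaSammartano}; either way, that core step must be supplied in full.
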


\begin{proof}
The theorem is proved in \cite[Theorem 4]{Valla},
under the assumption that $I \subseteq \mm_A^2$.
However, another proof is given in \cite[Theorem 3.7]{CavigliaSammartano},
without that assumption.
\end{proof}

We point out that, for our purposes, 
it will be useful  to consider non-minimal regular presentations when estimating Betti numbers,
and, thus,
to drop the assumption that $I \subseteq \mm_A^2$.

\begin{cor}\label{CorValla}
Let $\G$ be a numerical semigroup.
Then, for all  $i\geq 1$ we have 
$$b_i(R_\G) \leq i{m(\G) \choose i+1}.$$
\end{cor}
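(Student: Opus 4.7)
The plan is to derive the corollary from Valla's theorem (Theorem \ref{ThmValla}) by passing to the artinian reduction $\overline{R}_\G$ and using a deliberately non-minimal regular presentation, chosen so that the ambient regular ring has Krull dimension exactly $m - 1$, where $m = m(\G)$. Recall that $b_i(R_\G) = b_i(\overline{R}_\G)$, that $\overline{R}_\G$ is artinian of length $m$, and that its embedding dimension $\nu$ satisfies $\nu + 1 \leq m$. I would adjoin $m - 1 - \nu$ redundant variables to the minimal presentation $\overline{P} \twoheadrightarrow \overline{R}_\G$, obtaining a presentation $\overline{R}_\G = A'/\mathfrak{a}$ with $A' = \Bbbk\llbracket x_1, \ldots, x_{m-1}\rrbracket$. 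Since the minimal $A'$-free resolution of $\overline{R}_\G$ is the tensor product of the minimal $\overline{P}$-resolution with the Koszul complex on the adjoined variables, we have $b_i(\overline{R}_\G) \leq b_i^{A'}(\overline{R}_\G)$, so it suffices to bound the right-hand side.

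Next, I would apply Theorem \ref{ThmValla} to the presentation $\overline{R}_\G = A'/\mathfrak{a}$: the ideal $\mathfrak{a}$ has codimension $n = m - 1$, and $\overline{R}_\G$ is artinian of multiplicity $m$. The theorem yields $b_i^{A'}(\overline{R}_\G) \leq b_i^{S'}(S'/\mathcal{C})$, where $S' = \Bbbk[x_1, \ldots, x_{m-1}]$ and $\mathcal{C} = \mathcal{C}(m, m-1)$ is the unique very compressed lexsegment ideal in $S'$ with $\ell(S'/\mathcal{C}) = m$. Since $\dim_\Bbbk [S']_0 + \dim_\Bbbk [S']_1 = 1 + (m - 1) = m$, the only such ideal is $\mathcal{C} = \mm_{S'}^2$.

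It remains to compute $b_i^{S'}(S'/\mm_{S'}^2)$, which is classical: for example, since $\mm^2$ is a stable monomial ideal, the Eliahou--Kervaire resolution gives
\[
b_i^{S'}(S'/\mm_{S'}^2) \;=\; \sum_{j=1}^{m-1} j \binom{j-1}{i-1} \;=\; i \sum_{j=i}^{m-1} \binom{j}{i} \;=\; i \binom{m}{i+1},
\]
where the first equality records the contribution of the $j$ minimal generators of $\mm^2$ with largest variable $x_j$, the second uses the identity $j\binom{j-1}{i-1} = i\binom{j}{i}$, and the third is the hockey-stick identity. Chaining the three inequalities produces the desired bound $b_i(R_\G) \leq i\binom{m}{i+1}$. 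No significant obstacle is anticipated; the only slightly non-routine ingredient is the trick of enlarging the presentation so that the very compressed ideal $\mathcal{C}$ degenerates to $\mm^2$, making the combinatorial side of the bound explicit.
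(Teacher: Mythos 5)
Your proposal is correct and follows essentially the same route as the paper: pad the presentation with redundant variables so that Valla's theorem applies with codimension $m-1$, observe that $\mathcal{C}(m,m-1)=\mm_{S'}^2$, and compute $b_i^{S'}(S'/\mm_{S'}^2)=i\binom{m}{i+1}$. The only differences are cosmetic: you work with the artinian reduction $\overline{R}_\G$ in $m-1$ variables instead of $R_\G$ in $m$ variables (both give codimension $m-1$ and multiplicity $m$), and you compute the Betti numbers of $\mm^2$ via the Eliahou--Kervaire formula rather than the Eagon--Northcott complex.
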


\begin{proof}
Let $\pi_\G : P = \Bbbk \llbracket x_0, x_1, \ldots, x_\nu \rrbracket \twoheadrightarrow R_\G$  be the minimal regular presentation introduced above.
We have $\nu+1 \leq m$, thus, we can add variables and extend $\pi_\G$ to a  non-minimal presentation 
$\pi'_\G : P' = \Bbbk \llbracket x_0, x_1, \ldots, x_\nu, x_{\nu+1}, \ldots, x_{m-1} \rrbracket \twoheadrightarrow R_\G$
defined by $\pi'_\G(x_i) = 0$ for $i > \nu$,
and we have
  $I'_\G = \ker( \pi'_\G) = I_\G P'+ (x_{\nu+1}, \ldots, x_m) \subseteq P'$.
Since the generators of $I_\G P'$ and  $(x_{\nu+1}, \ldots, x_m)$ involve disjoint sets of variables,
the  minimal free resolution of $I'_\G$ is the tensor product of the resolutions of the two ideals. 
In particular, we see that $b_i(R_\G)= b_i^P(R_\G) \leq b_i^{P'}(R_\G)$ for all $i$.
Since $I'_\G \subseteq P'$ has codimension $m-1$ and  $P'/I'_\G \cong R_\G$ is Cohen-Macaulay of multiplicity $m$,
by Theorem \ref{ThmValla} we deduce that $b_i^{P'}(R_\G) \leq b_i^S(S/\mathcal{C})$ for every $i$, where
$$
\mathcal{C}=\mathcal{C}(m,m-1) = (x_1, \ldots, x_{m-1})^2 \subseteq S = \Bbbk[x_1,\ldots, x_{m-1}].
$$ 
The ideal $\mathcal{C}$ is resolved by an Eagon-Northcott complex, and we have $b_i^S(S/\mathcal{C}) = i{m \choose i+1}
$.
\end{proof}

The upper bounds of Corollary \ref{CorValla} are sharp,
since they are attained, for example, by the numerical semigroups
$\langle m, m+1, \ldots, 2m-1\rangle$ for all $m \geq 2$,
see for instance \cite[Proposition 2.7]{HerzogStamate}.

As an immediate consequence of Corollary \ref{CorValla},
the problem of bounding the Betti numbers of $R_\G$ in terms of the width $\w(\G)$ is 
solved when $\w(\G) \geq m(\G)-1$.

\begin{cor}\label{CorLargeWidth}
Let $\G$ be a numerical semigroup 
with  $\w(\G) \geq m(\G)-1$.
Then, for all $i\geq 1$
$$b_i(R_\G) \leq  i {\w(\G)+1 \choose i+1}.$$
\end{cor}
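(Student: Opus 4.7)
The plan is to derive this corollary directly from Corollary \ref{CorValla} by exploiting the monotonicity of binomial coefficients in their upper argument. Corollary \ref{CorValla} already gives the bound $b_i(R_\G) \leq i \binom{m(\G)}{i+1}$ in terms of the multiplicity, so the only work is to convert this into a bound in terms of the width under the assumption $\w(\G) \geq m(\G) - 1$.

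Concretely, I would first rewrite the hypothesis as $m(\G) \leq \w(\G) + 1$. Then, since the function $n \mapsto \binom{n}{i+1}$ is nondecreasing for nonnegative integers $n$ (it is zero for $n < i+1$ and strictly increasing for $n \geq i+1$), this yields
\[
\binom{m(\G)}{i+1} \;\leq\; \binom{\w(\G)+1}{i+1}.
\]
Multiplying by $i$ and chaining with Corollary \ref{CorValla} gives
\[
b_i(R_\G) \;\leq\; i\binom{m(\G)}{i+1} \;\leq\; i\binom{\w(\G)+1}{i+1},
\]
which is the desired inequality.

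There is no genuine obstacle here; the statement is essentially a reformulation of Corollary \ref{CorValla} once the width is at least $m(\G) - 1$. The role of this corollary in the paper is to record that the conjectured Herzog--Stamate-type bound of Conjecture \ref{ConjectureBetti} is automatic in the ``large width'' regime, so that the remaining interesting case for the main theorems is $\w(\G) < m(\G) - 1$, which is precisely where the constraints on the Apéry set developed in Sections \ref{SectionMonomialIdeals} and \ref{SectionSyzygies} become nontrivial.
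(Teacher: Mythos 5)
Your argument is correct and matches the paper's: the corollary is stated there as an immediate consequence of Corollary \ref{CorValla}, using exactly the observation that $m(\G) \leq \w(\G)+1$ together with the monotonicity of $n \mapsto \binom{n}{i+1}$.
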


For the rest of the paper,
in view of Corollary \ref{CorLargeWidth}, 
we assume $\w(\G) \leq m(\G)-2$.

\section{Interval completion, monomial ideals, and Hilbert-Samuel function}\label{SectionMonomialIdeals}

To each numerical semigroup, we associate an artinian monomial ideal, as follows.

\begin{definition}
Let $\G$ be a numerical semigroup. 
With notation as in Section \ref{SectionPreliminaries},
we define  
$$
J_\G
= \mathrm{in}_{\mathrm{revlex}}\big((\ovI_\G)^*\big)
\subseteq Q= \Bbbk[x_1, \ldots, x_\nu],
$$ 
that is, the initial ideal of $(\ovI_\G)^*\subseteq Q $ with respect to the revlex monomial order.
\end{definition}

\noindent
We are interested in this monomial ideal because it gives upper bounds for the Betti numbers of the semigroup ring:
it is well-known that
passing to the artinian reduction, then to the associated graded ring, and finally to the initial ideal, yields the relations
\begin{equation}\label{EqBoundIGJG}
b_i(I_\G)  = b_i(\ovI_\G) \leq b_i((\ovI_\G)^*) \leq b_i(J_\G).
\end{equation}

\begin{example}
Let $\G = \langle 7,9,12,15\rangle$. 
Using  \cite{M2},
we compute the ideals associated to $\G$:
\begin{align*}
I_\G &=  \big(
x_0^3-x_1x_2,\, x_2^2-x_1x_3,\, x_1^3-x_2x_3,\, x_1^2x_2-x_3^2 
\big)
\subseteq P = \Bbbk\llbracket x_0, x_1, x_2, x_3\rrbracket,\\
\ovI_\G & = \frac{I_\G + (x_0)}{(x_0)} = 
\big(
x_1x_2,\, x_2^2-x_1x_3,\, x_1^3-x_2x_3,\, x_3^2 
 \big)
 \subseteq \overline{P} = \Bbbk \llbracket x_1,x_2,x_3\rrbracket,\\
(\ovI_\G)^* & = 
\big(
 x_1x_2,\,  x_2^2-x_1x_3,\, x_2x_3,\, x_3^2,\,  x_1^4 
 \big)
 \subseteq Q = \Bbbk[x_1,x_2,x_3],\\
J_\G & = 
\mathrm{in}_{\mathrm{revlex}}\big((\ovI_\G)^*\big) = 
\big(
x_1x_2,\, x_2^2,\,  x_2x_3,\, x_3^2,\,    x_1^4,\, x_1^2x_3
 \big)
 \subseteq Q.
\end{align*}
\end{example}

In this section, we prove  some constraints on the structure of the  ideal $J_\G$.

\begin{thm}\label{ThmJG}
Let $\G$ be a numerical semigroup with multiplicity $m$ and width $w$.
Assume that $w \leq m-2$.
Then, we have $J_\G \subseteq (x_1, \ldots, x_{\nu-1})^2 + x_\nu^q(x_1,\ldots,x_\nu)$,
where  $q = \lfloor \frac{m-1}{w}\rfloor$. 
Moreover, the Hilbert-Samuel function satisfies $\HS(Q/J_\G,d)\leq 1+dw$ for all $d \in \NN$.
\end{thm}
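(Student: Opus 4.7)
The plan is to handle the two assertions separately, both times exploiting the combinatorics of the Apéry set of $\G$. For the Hilbert--Samuel bound, I would use the identity
\[
\HS(Q/J_\G,d) = \HS(\gr(\ovR_\G),d) = \big|\{\gamma \in \Ap(\G) : \ord(\gamma) \leq d\}\big|,
\]
where $\ord(\gamma)$ denotes the $\mm$-adic order of $t^\gamma$ in $\ovR_\G$, equivalently the \emph{maximum} of $\sum_{i=1}^\nu \alpha_i$ over expressions $\gamma = \sum \alpha_i g_i$ (all of which automatically satisfy $\alpha_0 = 0$ because $\gamma \in \Ap(\G)$). For $\gamma$ with $\ord(\gamma)\leq d$, pick any such expression with $\sum \alpha_i = s \leq d$ and set $h_i := g_i - m \in [1,w]$; then $\gamma = sm + r$ with $r = \sum \alpha_i h_i \in \{0\} \cup [1, dw] = [0, dw]$. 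Since distinct elements of $\Ap(\G)$ lie in distinct residue classes modulo $m$, the residue $\gamma \bmod m$ takes at most $\min(dw+1,m) \leq 1 + dw$ values, giving the bound.

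For the ideal containment, my strategy is to prove that every monomial lying outside $K := (x_1,\ldots,x_{\nu-1})^2 + x_\nu^q(x_1, \ldots, x_\nu)$ is a standard monomial, i.e., does not belong to $J_\G$. A direct inspection shows that in each degree $d$ with $1 \leq d \leq q$, these \emph{safe} monomials are exactly $x_\nu^d$ together with $x_i x_\nu^{d-1}$ for $1 \leq i \leq \nu-1$; crucially, these coincide with the $\nu$ smallest degree-$d$ monomials in the revlex order. Consequently, if their images in $[\gr(\ovR_\G)]_d$ are $\Bbbk$-linearly independent, then none of them can arise as the leading term of a homogeneous element of $(\ovI_\G)^*$ of degree $d$, and they are all standard.

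The required linear independence reduces to showing that each exponent $dg_\nu$ and $g_i + (d-1)g_\nu$ (for $i < \nu$) lies in $\Ap(\G)$ and has order exactly $d$, so that they correspond to $\nu$ distinct basis elements of $\mm^d/\mm^{d+1}$. For $\gamma = dg_\nu$ the Apéry condition is $(d-1)m + dw \notin \G$; writing a hypothetical expression as $sm + \sum \alpha_k h_k$ and combining the bounds $0 \leq \sum \alpha_k h_k \leq w(s - \alpha_0)$ with the arithmetic threshold $qw \leq m-1$ forces $s$ into an empty interval, and an analogous argument handles $g_i + (d-1)g_\nu - m = (d-1)m + (d-1)w + h_i$. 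The same estimate yields $\ord(\gamma) = d$, since an expression with $d+1$ generators would require $\gamma - (d+1)m \geq 0$, again contradicting $qw \leq m-1$. The core difficulty is this containment step, relying both on the structural coincidence between safe monomials and the revlex-smallest ones and on the sharp arithmetic at $q = \lfloor (m-1)/w \rfloor$.
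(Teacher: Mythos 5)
Your proposal is correct, but it takes a genuinely different route from the paper's. The paper proves both claims by passing to the interval completion $\tG = \langle m, m+1, \ldots, m+w\rangle$: it checks that artinian reduction, initial forms, and revlex initial ideals are all compatible with the inclusion $\eta\colon Q_\G \hookrightarrow Q_\tG$, so that $J_\G \subseteq J_\tG \cap Q_\G$; it then computes $J_\tG = (y_1,\ldots,y_{w-1})^2 + y_w^q(y_r,\ldots,y_w)$ explicitly (where $m=qw+r$, $1\leq r\leq w$) by a length count, and deduces the Hilbert--Samuel bound from the comparison $\ord_\tG(\omega_j(\tG)) \leq \ord_\G(\omega_j(\G))$ of orders of Ap\'ery elements. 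You instead work entirely inside $\G$: for the Hilbert--Samuel bound you inject the Ap\'ery elements of order at most $d$ into the residues modulo $m$ of the interval $[0,dw]$, which is shorter than the paper's argument and even gives $\min(1+dw,m)$; for the containment you note that the monomials outside $(x_1,\ldots,x_{\nu-1})^2 + x_\nu^q(x_1,\ldots,x_\nu)$ in each degree $d\leq q$ are precisely the $\nu$ revlex-smallest ones, and show they are standard because $dg_\nu$ and $g_i+(d-1)g_\nu$ lie in $\Ap(\G)$ with order exactly $d$ -- your case analysis on the number $s$ of summands against $d$, using $qw\leq m-1$, is exactly what is needed, and the down-set property of the revlex-smallest monomials makes the step ``linear independence of images implies no leading term'' legitimate. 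Your route is more elementary and self-contained, avoiding the functoriality bookkeeping; the paper's route buys the finer containment $J_\G \subseteq J_\tG \cap Q_\G$ (with the factor $(y_r,\ldots,y_w)$ rather than $(y_1,\ldots,y_w)$) and places the result within the Herzog--Stamate interval-completion framework, but the theorem as stated requires no more than what you prove.
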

\noindent
The proof of Theorem \ref{ThmJG} relies on the arithmetic of numerical semigroups.
In the next sections, we leave the context of numerical semigroups, 
and we  study more generally ideals that satisfy these constraints, 
estimate their syzygies, and conclude the proof of the main theorems.

We fix the following notations and assumptions for this section.
Let  $\G$  be a numerical semigroup with $m(\G)=m$, $\w(\G)=w$, and $w \leq m-2$.
The interval completion of $\G$, introduced in \cite{HerzogStamate},
is the numerical semigroup 
\begin{equation}\label{EqGTilde}
\tilde{\G}=  \langle m, m+1, m+2, \ldots, m+w\rangle.
\end{equation}
Since $w \leq m-2$,
the above set of generators of $\tilde{\G}$ is minimal.
All minimal generators of $\G$ are also minimal generators of $\tilde{\G}$,
and
we have  $\w(\G) = \w(\tilde{\G})$ and $m(\G) = m(\tilde{\G})$.
We are going to consider the rings and ideals of Section \ref{SectionPreliminaries} for both semigroups $\G$ and $\tilde{\G}$,
thus, we add the subscript $\G$ or $\tilde{\G}$ to distinguish them.
We use   variables $x_0, x_1, \ldots, x_\nu$ for $\G$ and $y_0, y_1, \ldots, y_{w}$ for $\tilde{\G}$.
There is  a commutative diagram
\begin{center}
\begin{tikzcd}
P_\G = \Bbbk \llbracket x_0, \ldots, x_\nu \rrbracket \arrow[twoheadrightarrow]{r}{\pi_\G}  \arrow[hookrightarrow]{d}{\eta}  & \,\,R_\G \arrow[hookrightarrow]{d}{\iota}  \\
P_\tG = \Bbbk \llbracket y_0, \ldots, y_w \rrbracket  \arrow[twoheadrightarrow]{r}{\pi_\tG}
& \,\, R_\tG
\end{tikzcd}
\end{center}where $\iota : R_\G \hookrightarrow R_\tG$ is the inclusion induced by $\G \subseteq \tilde{\G}$,
and $\eta : P_\G \hookrightarrow P_\tG$ is the inclusion defined by setting $\eta(x_i) = y_j$ for $g_i = m+j$.
We have $\eta(x_0)= y_0, \eta(x_\nu) = y_w$,
and, since the order of the variables is preserved,
$\eta$ preserves inequalities of monomials with respect to the revlex orders in 
$Q_\G$ and $Q_\tG$.

The commutativity of the diagram implies that
$
I_\G = I_\tG \cap P_\G,
$
where, by a standard abuse of notation,
 the symbol $\cap$ denotes the pullback via $\eta$.
Applying the artinian reduction, we see that
$
I_\G +(x_0) = (I_\tG \cap P_\G) + (x_0) \subseteq \big(I_\tG + (y_0)\big) \cap P_\G, 
$
hence, that $\ovI_\G \subseteq \ovI_\tG \cap \ovP_\G$.
Taking ideals of initial forms preserves inclusions, 
thus, we have
$(\ovI_\G)^* \subseteq (\ovI_\tG)^* \cap \ovP_\G$.
Similarly, 
the inclusion is  preserved when
taking initial ideals with respect to revlex
in $Q_\G$ and $Q_\tG$, 
thus, we obtain the inclusion of monomial ideals
\begin{equation}\label{EqInclusionOfMonomialIdeals}
J_\G \subseteq J_\tG \cap Q_\G,
\end{equation}
where, again, 
 $\cap$ denotes the pullback via the inclusion $\eta : Q_\G \hookrightarrow Q_\tG$ defined as above.

We determine the  ideal $ J_\tG$ explicitly.

\begin{prop}\label{PropJGTilde}
Let $q, r \in \NN$ be the integers such that $m = qw+r$ and $1 \leq r \leq w$.
Then,
$$
J_\tG = (y_1, \ldots, y_{w-1})^2 + y_w^q(y_r, \ldots, y_w). 
$$
\end{prop}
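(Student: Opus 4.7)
The plan is to prove the identity $J_\tG = (y_1,\ldots,y_{w-1})^2 + y_w^q(y_r,\ldots,y_w)$ by first establishing the containment $J_\tG \supseteq (y_1,\ldots,y_{w-1})^2 + y_w^q(y_r,\ldots,y_w)$ through explicit witnesses in $(\ovI_\tG)^*$, and then deducing equality via a colength comparison. Since $\ovR_\tG$ has length $m$ and both the passage to the associated graded ring and the passage to a revlex initial ideal preserve $\Bbbk$-dimension, $\ell(Q_\tG/J_\tG) = m$. A direct enumeration shows the proposed right-hand side has colength $m$ as well: its standard monomials are precisely $\{y_w^c : 0 \leq c \leq q\} \cup \{y_i y_w^c : 1 \leq i \leq r-1,\, 0 \leq c \leq q\} \cup \{y_i y_w^c : r \leq i \leq w-1,\, 0 \leq c \leq q-1\}$, totaling $(q+1)+(r-1)(q+1)+(w-r)q = wq+r = m$. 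So once the containment is established, equality follows automatically since both sides are monomial ideals with the same colength.

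To show each generator $y_i y_j$ of $(y_1,\ldots,y_{w-1})^2$ lies in $J_\tG$, I analyze $(m+i)+(m+j) = 2m+(i+j)$ in two subcases with $1 \leq i \leq j \leq w-1$. If $i+j \leq w$, then $(2m+(i+j))-m = m+(i+j) \in \tG$, so $2m+(i+j) \notin \Ap(\tG)$ and $y_i y_j$ maps to zero in $\ovR_\tG$; being homogeneous of degree $2$, it equals its own initial form in $(\ovI_\tG)^*$ and its own revlex leading monomial, so $y_i y_j \in J_\tG$. If instead $i+j > w$, the alternative representation $2m+(i+j) = (m+(i+j-w))+(m+w)$ yields the homogeneous binomial $y_i y_j - y_{i+j-w} y_w \in \ovI_\tG$; the revlex leading term is $y_i y_j$ because it has smaller maximal index than $y_{i+j-w} y_w$, so again $y_i y_j \in J_\tG$.

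For the generators $y_i y_w^q$ with $r \leq i \leq w$, I use $m = qw + r$ to rewrite $(m+i)+q(m+w) = (q+1)m + (qw+i) = (q+2)m + (i-r)$, the last equality coming from $qw+i = m+(i-r)$. Since $i \geq r$, subtracting $m$ gives $(q+1)m + (i-r) \in \tG$, hence $(q+1)m+(qw+i) \notin \Ap(\tG)$, and $y_i y_w^q$ maps to zero in $\ovR_\tG$. Being homogeneous of degree $q+1$, this monomial is its own initial form and its own revlex leading term, so $y_i y_w^q \in J_\tG$. Combined with the quadratic case, this yields the desired containment, and the colength comparison forces equality. The main delicate point is the revlex leading-term bookkeeping in the quadratic binomial case together with verifying that the arithmetic $m = qw+r$ excludes precisely the right elements from $\Ap(\tG)$; the remaining steps then follow routinely.
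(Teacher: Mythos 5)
Your proposal is correct and follows essentially the same route as the paper: exhibit homogeneous monomials and binomials in $(\ovI_\tG)^*$ (quadrics killed by the Apéry/ideal structure, binomials $y_iy_j - y_{i+j-w}y_w$ whose revlex leading term avoids $y_w$, and the monomials $y_w^qy_i$ for $i \geq r$ via $m = qw+r$) to get the containment, then conclude by comparing the colength $m$ of both sides. The only cosmetic differences are that the paper derives the monomial witnesses by reducing explicit binomials of $I_\tG$ modulo $y_0$ rather than invoking the Apéry set directly, and records the colength via the Hilbert function of $Q_\tG/H$ (which it reuses later) rather than by listing standard monomials.
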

\begin{proof}
Inspecting the generators \eqref{EqGTilde} of $\tilde{\G}$, 
we can determine some binomials in the toric ideal $I_\tG$.
We have
$
\mathfrak{f}_{h,i,j,k} := y_hy_i - y_j y_k\in I_\tG
$
for all indices 
$0 \leq h,i,j,k \leq w$
such that 
$h+i = j+k$,
 and 
$
\mathfrak{g}_i :=
y_w^qy_{i+r}-y_0^{q+1}y_i
\in I_\tG
$ 
for
$
0 \leq i \leq w-r.
$
Going modulo $y_0$, it follows that 
$\ovI_\tG$ contains 
the monomials $\overline{\mathfrak{f}}_{h,i,0,h+i} =  y_h y_i$ for 
all $1 \leq h,i\leq w-1$ such that $h+i\leq w$,
the binomials $\overline{\mathfrak{f}}_{h,i,j,k} =  y_hy_i - y_j y_k$  for all indices $1 \leq h,i,j,k \leq w$
such that $h+i = j+k$,
 and the monomials
$
\overline{\mathfrak{g}}_i =
y_w^qy_{i+r}
$ 
for
$
0 \leq i \leq w-r.
$
All monomials and binomials listed in the previous sentence are homogeneous polynomials,
so they coincide with their initial forms,
and thus they also belong to $(\ovI_\tG)^*$.
Finally,
for any  indices $1 \leq j < h \leq i < k\leq w$
such that $h+i = j+k$,
the initial term of  
$\overline{\mathfrak{f}}_{h,i,j,k} =  y_hy_i - y_j y_k$ 
with respect to the revlex order is $y_hy_i$.
Thus, we obtain the inclusion of ideals of $Q_\tG = \Bbbk[x_1, \ldots, x_w]$
$$
H := (y_1, \ldots, y_{w-1})^2 + y_w^q(y_r, \ldots, y_w) \subseteq \mathrm{in}_{\mathrm{revlex}}\big((\ovI_\tG)^*) =  J_\tG.
$$
The Hilbert function of 
the artinian graded $\Bbbk-$algebra $ Q_\tG/H$ is 
\begin{equation}\label{EqHFtG}
\HF( Q_\tG/H,e) = \begin{cases}
1 & \text{ if } e=0,\\
w & \text{ if } 1\leq e\leq q,\\
r-1 & \text{ if } e=q+1,\\
0 & \text{ if } e\geq q+2,\\
\end{cases}
\end{equation}
in particular,  $\ell(Q_\tG/H)=qw+r=m$.
On the other hand, passing to the associated graded ring and to the initial ideal preserves  length, 
so $\ell(Q_\tG/J_\tG)=\ell(\ovR_\tG)=m$.
Thus, we must have $H = J_\tG$.
 \end{proof}
 
Next, we establish a  comparison between the Hilbert-Samuel functions of 
$\gr(\ovR_\G)$ and  $\gr(\ovR_\tG)$.

\begin{prop}\label{PropHilbertSamuelInequality}
For every $d \in \NN$, we have
$\HS(\gr(\ovR_\G),d) \leq \HS(\gr(\ovR_\tG),d)$.
\end{prop}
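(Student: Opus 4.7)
My plan is to reinterpret both Hilbert--Samuel values as counting functions on Apéry sets, and then reduce the inequality to a residue-modulo-$m$ argument. The inclusion $J_\G \subseteq J_\tG \cap Q_\G$ derived in the preceding discussion, while suggestive, goes the wrong way for a pointwise Hilbert function comparison (since both quotients have the same total length $m$, a smaller ideal produces a larger Hilbert function in each degree), so I will argue combinatorially instead of trying to leverage it directly.

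First, I would identify $\HS(\gr(\ovR_\G), d)$ with a counting function on $\Ap(\G)$. Since $\gamma \in \Ap(\G)$ forces $a_0 = 0$ in every factorization $\gamma = \sum_{i \geq 0} a_i g_i$ (otherwise $\gamma - m \in \G$), a direct computation shows that $\mm_{\ovR_\G}^d$ has $\Bbbk$-basis $\{t^\gamma : \gamma \in \Ap(\G),\, \ord_\G(\gamma) \geq d\}$, where $\ord_\G(\gamma)$ is the maximal number of summands in a factorization of $\gamma$ in $\G$. Since passing to the associated graded ring preserves lengths, this gives
\[
\HS(\gr(\ovR_\G), d) \,=\, \#\{\gamma \in \Ap(\G) : \ord_\G(\gamma) \leq d\}.
\]
Applying the same identity to $\tG$, or reading it off the $h$-vector $(1, w, w, \ldots, w, r-1)$ extracted in the proof of Proposition \ref{PropJGTilde}, yields the closed form $\HS(\gr(\ovR_\tG), d) = \min(1 + dw, m)$. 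So it suffices to show $\#\{\gamma \in \Ap(\G) : \ord_\G(\gamma) \leq d\} \leq \min(1 + dw, m)$.

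The heart of the proof is a residue argument. For $\gamma \in \Ap(\G) \setminus \{0\}$ with $\ord_\G(\gamma) = e$, fix a maximal factorization $\gamma = \sum_{i=1}^\nu a_i g_i$ with $\sum_i a_i = e$; writing $g_i = m + c_i$ with $c_i \in \{1, \ldots, w\}$ yields $\gamma = em + \sum_i a_i c_i$ and $e \leq \sum_i a_i c_i \leq ew$. Hence $\gamma \bmod m$ lies in $\{e, e+1, \ldots, ew\} \bmod m$, and taking the union over $e = 1, \ldots, d$,
\[
\{\gamma \bmod m : \gamma \in \Ap(\G) \setminus \{0\},\, \ord_\G(\gamma) \leq d\} \,\subseteq\, \{1, 2, \ldots, dw\} \bmod m.
\]

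To conclude, distinct elements of $\Ap(\G)$ have distinct residues modulo $m$, and the only Apéry element in residue class $0$ is $\gamma = 0$ itself (because $km \in \G$ implies $km - m \in \G$ for $k \geq 1$). Therefore
\[
\#\{\gamma \in \Ap(\G) : \ord_\G(\gamma) \leq d\} \,\leq\, 1 + \min(dw, m-1) \,=\, \min(1 + dw, m) \,=\, \HS(\gr(\ovR_\tG), d).
\]
I expect the main technical obstacle to be the Apéry-order identification in the second paragraph: one must carefully track the passage from the $\mm_{\ovR_\G}$-adic filtration on $\ovR_\G$ to the grading on $\gr(\ovR_\G)$, and justify that $\ord_\G$ computed using only the non-multiplicity generators $g_1, \ldots, g_\nu$ is the correct notion of degree. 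Once that identification is in place, the residue-counting in the last two paragraphs is elementary arithmetic.
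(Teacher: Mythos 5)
Your proof is correct, and although it opens with the same reduction as the paper — identifying $\HS(\gr(\ovR_\G),d)$ with the number of Apéry elements of order at most $d$, with the same justification that Apéry membership forces $a_0=0$ in every factorization — the comparison step is genuinely different. The paper never evaluates $\HS(\gr(\ovR_\tG),\cdot)$ inside this proof: it instead proves the pointwise inequality $\ord_\tG(\omega_j(\tG)) \leq \ord_\G(\omega_j(\G))$ for each residue class $j$, by contradiction, sandwiching $\omega_j(\tG)$ between $\beta m$ and $m+(\beta-1)(m+w)$ and rewriting it as $m$ plus an element of $\tG$, contradicting $\omega_j(\tG)\in\Ap(\tG)$; the Hilbert--Samuel inequality then follows class by class. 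You instead compute the right-hand side in closed form as $\min(1+dw,m)$ (legitimate, since the Hilbert function $(1,w,\ldots,w,r-1)$ of $Q_\tG/J_\tG$ from Proposition \ref{PropJGTilde} equals that of $\gr(\ovR_\tG)$) and bound the left-hand side by pigeonholing residues: an Apéry element of order $e$ is congruent modulo $m$ to some $s\in\{e,\ldots,ew\}$, because the standing hypothesis $w\leq m-2$ gives $g_i=m+c_i$ with $1\leq c_i\leq w$ for $i\geq 1$, so at most $\min(dw,m-1)$ nonzero residue classes, plus the class of $0$, can carry Apéry elements of order at most $d$. What each approach buys: the paper's element-by-element order comparison is a finer statement and is independent of the computation in Proposition \ref{PropJGTilde}, while your count delivers the explicit bound $\HS(\gr(\ovR_\G),d)\leq \min(1+dw,m)$ directly, which is exactly the form invoked later in the proof of Theorem \ref{ThmJG}, so it slightly streamlines that application; your opening caution that the inclusion $J_\G \subseteq J_\tG \cap Q_\G$ points the wrong way for a degreewise Hilbert function comparison is also well taken.
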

\begin{proof}
We recall that $\ovR_\G$, and hence $\gr(\ovR_\G)$, have the $\Bbbk$-basis 
$\{ t^\gamma \, \mid \, \gamma \in \Ap(\G)\}$.
The Apéry set of $\G$ is equal to $
\Ap(\G)  = \big\{\omega_j(\Gamma) \, \mid \, j = 0, \ldots, m-1\big\},
$
where 
$$
\omega_j(\Gamma)=\min \{\omega \in \G \, \mid \, \omega \equiv j \pmod{m}\}.
$$
The degree of a monomial $t^\gamma \in \gr(\ovR_\G)$ is equal to the largest $s \in \NN$ such that $t^\gamma \in \mm_{R_\G}^s$,
equivalently, it is equal to  the so-called order of the element $\gamma\in \G$, defined as 
$$
\ord_\G(\gamma) = \max\left\{ \sum_{i=1}^\nu \alpha_i \, \mid\, \gamma = \sum_{i=1}^\nu \alpha_i g_i\right\}.
$$
It follows that $\HS(\gr(\ovR_\G),d) = \#\, \{ j  \, \mid \, \ord_\G(\omega_j(\G))\leq d\}$.
In order to prove the proposition, it suffices to show that $\ord_\tG(\omega_j(\tG)) \leq \ord_\G(\omega_j(\G))$ for all $j = 0, \ldots, m-1$.

Since $\w(\G) \leq m-2$,  the generators of $\G$ are of the form $g_i = m+j_i$ for some $0\leq j_i \leq w$.
We denote $\Omega = \{j_i \,\mid \, i = 1, \ldots, \nu \}\subseteq\{0, \ldots, w\}$.
Fix an $h \in \{0,\ldots,m-1\}$ and consider the elements 
$\omega_h(\G) \in \Ap(\G)$ and $\omega_h(\tG) \in \Ap(\tG)$.
Let $\alpha = \ord(\omega_h(\G))$ and $\beta = \ord(\omega_h(\tG))$.
By definition of order and $\Omega$, there exist $\alpha_j, \beta_j \in \NN$ such that
\begin{align*}
\omega_h(\G) = \sum_{j = 1 }^w \alpha_j (m+j), & \quad \sum_{j=1}^w \alpha_j =\alpha,\quad  \alpha_j = 0 \text{ for } j \notin \Omega,\\
\omega_h(\tG) = \sum_{j=1}^w \beta_j (m+j), & \quad \sum_{j =1}^w \beta_j =\beta.
\end{align*}

Assume by contradiction that $\ord(\omega_h(\tG)) > \ord(\omega_h(\G))$,
that is, $\beta>\alpha$.
Since $\G \subseteq \tG$,
it follows from the definition of $\omega_h(\cdot)$ that $\omega_h(\tG)  \leq \omega_h(\G)$.
We obtain the inequalities
\begin{align*}
\omega_h(\tG) \leq \sum_{j=1}^w \alpha_j (m+j)
\leq \sum_{j=1}^w \alpha_j (m+w)= \alpha(m+w) 
\leq (\beta-1)(m+w) \leq \beta m +(\beta-1) w
\end{align*}
On the other hand, we have $\omega_h(\tG) \geq \sum_{j=1}^w \beta_j m = \beta m$, therefore,
we can write $\omega_h(\tG) = \beta m + \delta$ for some $\delta \in \NN$ such that $0 \leq \delta \leq (\beta-1)w$.
Write $\delta = (\beta-1)\zeta +\rho $ with $\zeta,\rho\in \NN$ and $\rho < \beta-1$.
We must have either $\zeta < w$ or $\zeta = w $ and $\rho =0$.
We have $\delta = \rho ( \zeta+1) + (\beta-1-\rho)\zeta$ and, thus,
\begin{equation}\label{EqNotInAperySet}
 \omega_h(\tG) = \beta m + \rho ( \zeta+1) + (\beta-1-\rho)\zeta = m + \rho(m+\zeta+1) + (\beta-1-\rho)(m+\zeta).
\end{equation}
Note that $m + \zeta\in \tG$, and if $\rho \ne 0$ then we also have $m+\zeta+1 \in \tG$.
In any case, equation \eqref{EqNotInAperySet} shows that $ \omega_h(\tG)-m \in \tG$, contradicting 
the fact that $\omega_h(\tG)\in \Ap(\tG)$.
\end{proof}

We can now complete the proof of Theorem \ref{ThmJG}.

\begin{proof}[Proof of Theorem \ref{ThmJG}]
Combining equation \eqref{EqInclusionOfMonomialIdeals} and Proposition \ref{PropJGTilde}, we obtain
\begin{align*}
J_\G &\subseteq  \big((y_1, \ldots, y_{w-1})^2 + y_w^q(y_r, \ldots, y_w)\big) \cap  Q_\G\\
&\subseteq  \big((y_1, \ldots, y_{w-1})^2 + y_w^q(y_1, \ldots, y_w)\big) \cap  Q_\G\\
&= (x_1, \ldots, x_{\nu-1})^2 + x_\nu^q(x_1, \ldots, x_\nu),
\end{align*}
verifying the first claim.
By Proposition \ref{PropHilbertSamuelInequality}, we have $\HS(\gr(\ovR_\G),d) \leq \HS(\gr(\ovR_\tG),d)$ for all $d\in \NN$.
By equation \eqref{EqHFtG}, it follows that
$$
\HS\big(\gr(\ovR_\tG),d\big)
= \sum_{e=0}^d 
\HF\big(\gr(\ovR_\tG),e\big)
= 
\begin{cases}
1+dw & \text{ if }0 \leq  d \leq q,\\
m & \text{ if } d \geq q+1,
\end{cases}
$$
verifying the second claim.
\end{proof}

\section{Estimating the  syzygies}\label{SectionSyzygies}

Now that we have obtained Theorem \ref{ThmJG}, 
we can attack Question \ref{QueBoundWidth} by enlarging the class of ideals  and considering a more general problem:
   bound the Betti numbers of arbitrary homogeneous ideals 
of finite (but unspecified) colength, satisfying a constraint on the Hilbert-Samuel function.

\begin{problem}\label{ProblemArbitraryHomogeneous}
Let $S = \Bbbk[x_1, \ldots, x_n]$
and $I\subseteq S$ be a homogeneous ideal such that $\ell(S/I) < \infty$.
Let $w \in \NN$ be a positive integer.
Find upper bounds for the Betti numbers $b_i(S/I)$
under the assumption that
\begin{equation}\label{EqConstraintHS}
\HS(S/I,d) \leq 1+dw
\qquad \text{ for all }
d \in \NN.
\end{equation}
\end{problem}

Problem \ref{ProblemArbitraryHomogeneous} can be seen as a variant of Theorems \ref{ThmBHP} and \ref{ThmValla}, and it is interesting in its own right.
Both the Hilbert-Samuel function and the Betti numbers give rise to natural stratifications in  Hilbert schemes parametrizing artinian algebras, and it is often fruitful to study the interplay   between the two invariants.

In studying Problem \ref{ProblemArbitraryHomogeneous},
by Theorem \ref{ThmBHP}, 
we can restrict to lexsegment ideals.
The syzygies of a lexsegment ideal $L$
are governed by the colength of their ``hyperplane section'' $\frac{L+(x_n)}{(x_n)}$.
To this purpose,
in this section, we fix the following notation.
Let $S = \Bbbk[x_1, \ldots, x_n]$.
We denote by $\widehat{\cdot}$ images modulo $x_n$.
Thus, we have $\wS=S/(x_n) \cong   \Bbbk[x_1, \ldots, x_{n-1}]$,
and for ideals $I \subseteq S$ we denote by $\wI$ their image in $\wS$.

\begin{lemma}\label{LemmaSyzygiesHyperplane}
Let $L \subseteq S$ be a lexsegment  ideal such that $\ell(S/L) < \infty$.
Then, for all $i \geq 0$ we have
$$
b_i^S\big(S/L\big) \leq  \ell\big(\wS/\wL\big){n \choose i}.
$$
\end{lemma}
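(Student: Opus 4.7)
The plan is to invoke the Eliahou--Kervaire resolution for the strongly stable ideal $L$, reduce the desired bound to a combinatorial inequality on the pieces of $G(L)$ sorted by the index of the largest variable appearing in a generator, and verify that inequality via an explicit injection into the monomial basis of $\wS/\wL$.

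Since every lexsegment ideal is strongly stable, the Eliahou--Kervaire formula gives, for $i \geq 1$,
$$b_i^S(S/L) \;=\; \sum_{u\in G(L)} \binom{m(u)-1}{i-1} \;=\; \sum_{j=1}^n |G_j|\binom{j-1}{i-1},$$
where $m(u)$ denotes the largest index of a variable dividing $u$ and $G_j = \{u\in G(L):m(u)=j\}$. Using the hockey-stick identity $\sum_{j=1}^n\binom{j-1}{i-1}=\binom{n}{i}$, it would then suffice to prove that $|G_j|\leq \ell(\wS/\wL)$ for every $j=1,\dots,n$.

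To establish this key inequality, I would construct, for each $j$, an explicit injection from $G_j$ into the monomial basis of $\wS/\wL$. A preliminary observation is that for any monomial $w\in\wS$, one has $w\in\wL$ if and only if $w\in L$: indeed, the generators of $L$ involving $x_n$ cannot divide a monomial that does not involve $x_n$. For $j<n$, every $u\in G_j$ lies in $\Bbbk[x_1,\dots,x_j]\subseteq\wS$, the element $u/x_j$ is a proper divisor of $u$ and so lies outside $L$, and the map $u\mapsto u/x_j$ is patently injective. For $j=n$, I would write $u=x_n^a v$ with $v\in\wS$ and $a\geq 1$; then $v$ is a proper divisor of $u$, hence $v\notin L$, and the map $u\mapsto v$ is injective because two minimal generators with the same $v$-part would differ only by a power of $x_n$, so one would divide the other, contradicting minimality.

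Combining these bounds gives $b_i^S(S/L)\leq\ell(\wS/\wL)\binom{n}{i}$ for $i\geq 1$; the case $i=0$ follows because properness of $L$ forces $\wL\neq\wS$, so $\ell(\wS/\wL)\geq 1 = b_0^S(S/L)$. The main subtlety is the injection for $j=n$: the naive map $u\mapsto u/x_n$ need not land in $\wS$, so one must strip the entire $x_n$-part of $u$, after which minimality of the generators is what secures injectivity.
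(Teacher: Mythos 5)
Your proof is correct, and it takes a genuinely different route from the paper's. The paper argues via the hyperplane section in two steps: using the lexsegment property it decomposes $L/x_nL \cong \wL \oplus \Bbbk^{\ell(\wS/\wL)}$, which yields the exact identity $b_i^S(L) = b_i^{\wS}(\wL) + \ell(\wS/\wL)\binom{n-1}{i}$, and then bounds $b_i^{\wS}(\wS/\wL) \leq \ell(\wS/\wL)\binom{n-1}{i}$ by induction on the colength, adding the two binomials via Pascal's rule. You instead read off the Betti numbers exactly from the Eliahou--Kervaire resolution (valid since lexsegment ideals are stable, in any characteristic) and reduce everything to the counting bound $|G_j| \leq \ell(\wS/\wL)$, which your injections establish correctly: the preliminary observation that a monomial of $\wS$ lies in $\wL$ if and only if it lies in $L$ is right, the map $u \mapsto u/x_j$ for $j<n$ lands on nonzero classes because proper divisors of minimal generators are outside $L$, and for $j=n$ stripping the full $x_n$-part is indeed the correct fix, with injectivity secured by minimality exactly as you say; the hockey-stick summation and the $i=0$ case are also fine. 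Each approach has its advantages: your counting argument is self-contained and the injections use only minimality of generators (stability enters only through Eliahou--Kervaire), while the paper's decomposition produces the intermediate identity \eqref{EqBettiHyperplane1} as a standalone formula, which is reused later in the proof of Theorem \ref{TheoremBoundsTwoVariables}, so the paper's proof does double duty in a way yours would not.
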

\begin{proof}
We prove the lemma in two steps.
First, we claim that, for all  $i \geq 0$, we have
\begin{equation}\label{EqBettiHyperplane1}
b_i^S\big(L\big) = b_i^{\wS}\big(\wL\big) + \ell\big(\wS/\wL\big){n-1 \choose i}.
\end{equation}
The argument is similar to that of
\cite[Proof of Theorem 3.7]{CavigliaSammartano}.
Since $x_n$ is a non-zerodivisor on $S$ and $L$, we have
 $b_i^S\big(L\big) = b_i^{\wS}\big(L/x_nL\big)$.
Decompose $L = \oplus_{h=0}^\infty L_h x_n^h$, where $L_h \subseteq \wS$ are monomial ideals.
 Note that $L_0 = \wL$ and $L_h = \wS$ for $h \gg 0$.
It follows that we have a decomposition of $\wS$-modules 
\begin{equation}\label{EqDecompositionL}
 \frac{L}{x_nL}=  L_0 \oplus \bigoplus_{h=0}^\infty \frac{L_{h+1}}{L_h} \cong \wL \oplus \Bbbk^{\ell\big(\wS/\wL\big)}.
\end{equation}
To see the last isomorphism, 
note that the lexsegment property of $L$ implies  $(x_1, \ldots,x_{n-1}) L_{h+1} \subseteq L_h$ for every $h$.
Thus, the $\wS$-module 
$\bigoplus_{h=0}^\infty \frac{L_{h+1}}{L_h}$
is annihilated by $\mm_{\wS}$, 
and it is a direct sum of copies of $\Bbbk$.
The number of copies is $\sum_{h=0}^\infty \ell({L_{h+1}}/{L_h}) = \ell(L_h/L_0)$ for $h \gg 0$, that is, $\ell(\wS/\wL)$.
Now, formula \eqref{EqBettiHyperplane1} follows  from \eqref{EqDecompositionL} since $b_i^{\wS}(\Bbbk) = {n-1 \choose i}$ by the Koszul resolution.

Second, we claim that, for all $i \geq 0 $, we have
\begin{equation}\label{EqBettiHyperplane2}
b_i^{\wS}\big(\wS/\wL\big) \leq \ell\big(\wS/\wL\big){n-1 \choose i},
\end{equation}
We prove the claim by induction on $\ell\big(\wS/\wL\big)$.
If $\ell\big(\wS/\wL\big)=1$, then $\wL = \mm_{\wS}$, and \eqref{EqBettiHyperplane2} follows from the Koszul resolution.
Suppose $\ell\big(\wS/\wL\big)>1$, and let $I \subseteq \wS$ be a lexsegment ideal such that $\wL \subseteq I$ and $\ell(I/\wL)=1$.
It follows that $I /\wL \cong \Bbbk$ as $\wS$-module, and  $\ell(\wS/I) = \ell(\wS/\wL)-1$.
Applying the long exact sequence of $\mathrm{Tor}^{\wS}(\cdot, \Bbbk)$ (or the horseshoe lemma) to the short exact sequence 
$
0 \to {I}/{\wL} \to {\wS}/{\wL} \to {\wS}/{I} \to 0,
$
and the induction hypothesis,
we obtain the claim 
$$
b^{\wS}_i(\wS/\wL) \leq b^{\wS}_i(I/\wL) +b^{\wS}_i(\wS/I) = {n-1 \choose i} +b^{\wS}_i(\wS/I) \leq {n-1 \choose i} +\ell(\wS/I) {n-1 \choose i} = \ell(\wS/\wL) {n-1 \choose i}.
$$

Finally, combining \eqref{EqBettiHyperplane1} and \eqref{EqBettiHyperplane2} we conclude the proof of the lemma: 
if $i \geq 1$, then
\begin{align*}
b_i^S\big(S/L\big) 
&= b_{i-1}^S\big(L\big)  =b_{i-1}^{\wS}\big(\wL\big) + \ell\big(\wS/\wL\big){n-1 \choose i-1} 
= b_{i}^{\wS}\big(\wS/\wL\big) + \ell\big(\wS/\wL\big){n-1 \choose i-1} \\
&\leq  \ell\big(\wS/\wL\big){n-1 \choose i} + \ell\big(\wS/\wL\big){n-1 \choose i-1} 
= \ell\big(\wS/\wL\big){n \choose i},
\end{align*}
whereas the conclusion of the lemma is obvious for $i = 0$.
\end{proof}

\begin{prop}\label{PropEstimateHyperplaneSection}
Let $S =  \Bbbk[x_1, \ldots, x_w]$, with $w \geq 3$.
Let $L \subseteq S $ be a lexsegment ideal such that
 $\HS(S/L,d) \leq 1+dw$ for all $d \in \NN$.
Then, we have
$
\ell(\wS/\wL) \leq (3e)^{\sqrt{2w}},
$
where $e$ is the base of natural logarithms.
\end{prop}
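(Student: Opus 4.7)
My plan is to exploit the fact that $\wL$ inherits the lexsegment property from $L$, and to bound the Hilbert function $\widehat{f}_d := \HF(\wS/\wL, d)$ degree by degree before summing. Set $f_d := \HF(S/L, d)$. Since $L$ is lex, the complement of $L$ in $[S]_d$ consists of the $f_d$ lex-smallest monomials of $[S]_d$. A direct combinatorial observation is that the $\binom{d+k-1}{k-1}$ lex-smallest monomials of $[S]_d$ are precisely the monomials of $\Bbbk[x_{w-k+1}, \ldots, x_w]$ of degree $d$, of which exactly $\binom{d+k-2}{k-2}$ avoid $x_w$. This immediately yields the pointwise estimate
\begin{equation*}
\widehat{f}_d \,\leq\, \binom{d+k-2}{k-2} \qquad \text{whenever} \qquad f_d \,\leq\, \binom{d+k-1}{k-1}.
\end{equation*}

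Next, I would extract quantitative consequences from the Hilbert-Samuel bound $\HS(S/L, d) \leq 1+dw$. Since $\widehat{f}_d \geq 1$ already forces $f_d \geq d+1$ (the lex-smallest $x_w$-avoiding monomial $x_{w-1}^d$ sits in position $d+1$ from the bottom), and more generally $\widehat{f}_d$ reaching the threshold $\binom{d+k-2}{k-2}$ requires $f_d$ to reach the threshold $\binom{d+k-1}{k-1}$, summing the implied lower bounds on $f_d$ over the support of $\widehat{f}$ and comparing with $1+dw$ yields bounds on (i) the socle degree $D$ of $\wS/\wL$ and (ii) a uniform depth $K$ such that $f_d \leq \binom{d+K-1}{K-1}$ holds for every $d \leq D$. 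The target bounds are $K - 1 \leq \sqrt{2w}$ and $D \leq 2(K-1)$, so that $D + K - 1 \leq 3(K-1)$.

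Combining these, the pointwise estimate applied uniformly with $k = K$ together with the hockey-stick identity gives
\begin{equation*}
\ell(\wS/\wL) \,=\, \sum_{d=0}^D \widehat{f}_d \,\leq\, \sum_{d=0}^D \binom{d+K-2}{K-2} \,=\, \binom{D+K-1}{K-1}.
\end{equation*}
The standard inequality $\binom{n}{k} \leq (en/k)^k$, applied with $n = D+K-1 \leq 3(K-1)$ and $k = K-1 \leq \sqrt{2w}$, then yields
\begin{equation*}
\binom{D+K-1}{K-1} \,\leq\, (3e)^{K-1} \,\leq\, (3e)^{\sqrt{2w}},
\end{equation*}
completing the proof.

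The main technical obstacle is the middle step: rigorously deriving bounds of the form $K - 1 \leq \sqrt{2w}$ and $D \leq 2(K-1)$ from the Hilbert-Samuel constraint together with the Macaulay growth conditions on the sequence $(f_d)$. This is a constrained optimization problem over admissible lex Hilbert functions, and handling it requires a careful extremal analysis, probably carried out by tracking the Macaulay representations of the $f_d$'s and balancing them against the linear budget $1+dw$.
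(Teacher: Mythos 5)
Your opening observation is correct and is essentially dual to the paper's starting point: for a lexsegment ideal, if $\HF(S/L,d)\leq\binom{d+k-1}{k-1}$ then all standard monomials of degree $d$ lie in the last $k$ variables, so $\HF(\wS/\wL,d)\leq\binom{d+k-2}{k-2}$. The problem is the entire quantitative middle step, which you defer to an unspecified ``extremal analysis''; and in the form you state it, it is not merely unproven but false. There is in general no uniform $K$ with $\HF(S/L,d)\leq\binom{d+K-1}{K-1}$ for all $d\leq D$ and $K-1\leq\sqrt{2w}$: the ideal $L=\mm_S^2$ is lexsegment, satisfies $\HS(S/L,d)\leq 1+dw$, and has $\HF(S/L,1)=w$, forcing $K\geq w$, incompatible with $K-1\leq\sqrt{2w}$ as soon as $w\geq 4$ (note $\ell(\wS/\wL)=w$ here, so low degrees really do contribute on the order of $w$, not $\sqrt{2w}$). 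The bound $D\leq 2(K-1)$ also fails: $L=(x_1,\ldots,x_{w-2})+(x_{w-1},x_w)^{2w-2}$ is lexsegment, satisfies $\HS(S/L,d)=\binom{d+2}{2}\leq 1+dw$ for $d\leq 2w-3$ (with equality at $d=2w-3$), and has $\wS/\wL\cong\Bbbk[x_{w-1}]/(x_{w-1}^{2w-2})$, so $D=2w-3$ while the minimal admissible uniform value is $K=2$. Hence the chain $\ell(\wS/\wL)\leq\binom{D+K-1}{K-1}\leq(3e)^{K-1}\leq(3e)^{\sqrt{2w}}$ collapses.

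Moreover, the single-threshold hockey-stick bound is intrinsically too lossy to be repaired by a cleverer trade-off between $D$ and $K$: the Hilbert--Samuel budget allows $L$ to contain no linear forms (so $K\geq w$) while monomials in the last three variables survive up to degree of order $\sqrt{6w}$ (so $D\gtrsim\sqrt{6w}-3$), and then $\binom{D+K-1}{K-1}\geq\binom{w+\sqrt{6w}-4}{\lfloor\sqrt{6w}\rfloor-3}$ grows roughly like $\bigl(\sqrt{w/6}\bigr)^{\sqrt{6w}}$, which exceeds $(3e)^{\sqrt{2w}}$ for large $w$, even though the true length in such cases is only $O(w)$ plus a binomial with both parameters of order $\sqrt{w}$. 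What is needed is a degree-dependent threshold together with a split of the count: the paper applies the Hilbert--Samuel inequality at $d=2$, $d=\lfloor\sqrt{6w-2}\rfloor-2$ and $d=2w-2$ to force $x_i^2\in L$ for $i\leq w-2\sqrt{w}+2$, $x_i^{\lfloor\sqrt{6w-2}\rfloor-2}\in L$ for $i\leq w-2$, and $x_{w-1}^{2w-2}\in L$, then traps $\wL$ above an explicit monomial ideal whose colength, computed by inclusion--exclusion, is a term linear in $w$ plus a single binomial coefficient with both entries of order $\sqrt{w}$; only then does $\binom{A}{B}\leq(eA/B)^{B}$ give $(3e)^{\sqrt{2w}}$, with a finite verification for $3\leq w\leq 111$. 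As written, your argument does not yield the proposition.
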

\begin{proof}
We use the assumption on the Hilbert-Samuel function to estimate the least pure power of $x_i$ in $L$  for each  $i = 1, \ldots, w-1$.
We obtain the following estimates:
\begin{enumerate}[label=(\alph*)]
\item 
$x_i ^2 \in L$ for  $1 \le i \le w-2\sqrt{w}+2.$ 
\\
Assume by contradiction $x_{i}^{2}\notin L$.
By the lexsegment property  $L$ contains no monomial in $x_i, \ldots, x_w$ of degree at most 2.
Using the Hilbert-Samuel inequality for $d=2$, we obtain a contradiction
\begin{align*}
1+2w &\geq \HS(S/L,2) \geq \HS(\Bbbk[x_i, \ldots, x_w],2) = \binom{w-i+3}{2}
\\
&=\frac{(w-i+3)(w-i+2)}{2} \ge \frac{(2\sqrt{w}+1)2\sqrt{w}}{2}=2w+\sqrt{w}.
\end{align*}

\item $x_i^D \in L$ for $1 \leq i \leq w-2$, where $D= \lfloor \sqrt{6w-2}\rfloor-2$.
\\
By the lexsegment property, it suffices to show that $x_{w-2}^D \in L$.
Assume by contradiction that $x_{w-2}^D \notin L$, then $L$ contains no monomial in $x_{w-2},x_{w-1},x_w$ of degree at most $D$.
Using the Hilbert-Samuel inequality for $d=D$, we obtain the inequalities
\begin{align*}
1+Dw &\geq \HS(S/L,D) \geq \HS(\Bbbk[x_{w-2},x_{w-1},x_w],D) = \binom{D+3}{3} = \frac{(D+3)(D+2)(D+1)}{6}\\
& \Rightarrow 6+6Dw \geq D^3+6D^2+11D+6 \Rightarrow D^2+6D+(11-6w) \leq 0.
\end{align*}
Solving the quadratic inequality, we find $D \leq \sqrt{6w-2}-3$, contradiction.

\item 
$x_{w-1}^{2w-2}	\in L$. 
\\
Assume by contradiction that $x_{w-1}^{2w-2}\notin L$,
then $L$ contains no monomial in $x_{w-1},x_w$ of degree at most $2w-2$.
Using the Hilbert-Samuel inequality for $d=2w-2$ we obtain a contradiction
\begin{align*}
1+(2w-2)w &\geq \HS(S/L,2w-2) \geq \HS(\Bbbk[x_{w-1},x_w],2w-2) ={2w \choose 2}=2w^2-w.
\end{align*}
\end{enumerate}
Denote $C = \lceil 2 \sqrt{w}\rceil -2$.
Going modulo $x_w$, the estimates (a), (b), (c) and the lexsegment property yield the inclusion of ideals
$
I = (x_1, \ldots, x_{w-C})^2 + (x_1, \ldots, x_{w-2})^D + (x_1, \ldots, x_{w-1})^{2w-2} \subseteq \wL,
$
hence,
the inequality $\ell(\wS/\wL) \leq \ell(\wS/I) = \HS(\wS/I,2w-3)$.
We compute the latter by inclusion-exclusion
\begin{align}
 \HS(\wS/I,2w-3)=& \,\,
\HS(\Bbbk[x_1,\ldots,x_{w-1}],1) 	\nonumber\\
&+\HS(\Bbbk[x_{w-C+1},\ldots,x_{w-1}],D-1)-\HS(\Bbbk[x_{w-C+1},\ldots,x_{w-1}],1) \nonumber
\\
&+\HS(\Bbbk[x_{w-1}],2w-3)-\HS(\Bbbk[x_{w-1}],D-1)
\nonumber
\\
\label{EqBinomialCD}
=&\,\,  w + {C+D-2 \choose C-1} - (C-1) + (2w-4)-D.
\end{align}
Next, we use the well-known inequality\footnote{It follows by considering
${A \choose B}= \frac{A(A-1)\cdots(A-B+1)}{B!}\leq \frac{A^B}{B!}
=\left(\frac{A}{B}\right)^B\frac{B^B}{B!}
\leq \left(\frac{A}{B}\right)^B\sum_{k=0}^\infty \frac{B^k}{k!}
= \left(\frac{A}{B}\right)^Be^B.
$
}  ${A \choose B} \leq  \left(\frac{eA}{B}\right)^B$, 
where $A\geq B$ are positive integers.
Thus, we have inequalities
\begin{align}
\ell(\wS/\wL)  &\leq
3w-3-C-D+\left(e\frac{C+D-2}{C-1}\right)^{C-1} \nonumber\\
&= 3w - \lceil \sqrt{2w}\rceil - \lfloor \sqrt{6w-2}\rfloor +1 + 
\left(e\frac{\lceil \sqrt{2w}\rceil
+\lfloor \sqrt{6w-2}\rfloor-6}{\lceil \sqrt{2w}\rceil-3}\right)^{\lceil \sqrt{2w}\rceil-3}\nonumber\\
& \label{EqFunctionW} \leq 3w +
\left(e\frac{ \sqrt{2w}
+\sqrt{6w-2}-5}{ \sqrt{2w}-3}\right)^{ \sqrt{2w}-2}.
\end{align} 
It is straightforward to verify  that, if $w \geq 112$, then we have inequalities  
$$3(\sqrt{2w}-3) \geq \sqrt{2w}
+\sqrt{6w-2}-5
\qquad \text{and} \qquad
((3e)^2-1)(3e)^{\sqrt{2w}-2} \geq 3w,
$$
so we can bound \eqref{EqFunctionW} by 
$$
3w + (3e)^{ \sqrt{2w}-2}\leq (3e)^{ \sqrt{2w}}.
$$
On the other hand, we verify directly that the quantity \eqref{EqBinomialCD} is bounded above by 
$(3e)^{ \sqrt{2w}}$ for $3 \leq w \leq 111$,
thus concluding the proof that 
$\ell(\wS/\wL) \leq (3e)^{ \sqrt{2w}}$ for all $w \geq 3$.
\end{proof}

We have obtained the main result of this section,
which gives a partial solution to Problem \ref{ProblemArbitraryHomogeneous}.

\begin{thm}\label{TheoremBoundArbitraryHomogeneous}
Let $S = \Bbbk[x_1, \ldots, x_n]$
and $I\subseteq \mm_S^2$ be a homogeneous ideal such that $\ell(S/I) < \infty$.
Let $w \in \NN$ be a positive integer, and assume that 
$\HS(S/I,d) \leq 1+dw$
for all 
$d \in \NN$.
Then, we have $b_i(S/I) \leq {w \choose i} (3e)^{\sqrt{2w}}$ for all $i = 0,\ldots, n$.
\end{thm}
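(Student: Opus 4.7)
The plan is to combine the three tools developed above in the order: Theorem~\ref{ThmBHP} (to replace $I$ with its lex ideal), Lemma~\ref{LemmaSyzygiesHyperplane} (to bound the Betti numbers of a lex ideal by the colength of its hyperplane section), and Proposition~\ref{PropEstimateHyperplaneSection} (to bound that colength under the Hilbert--Samuel hypothesis). In fact these three results, composed in sequence, essentially give the theorem as soon as one works in a polynomial ring with the correct number of variables.

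Before chaining the tools, I would reconcile a mismatch: Proposition~\ref{PropEstimateHyperplaneSection} is stated for exactly $w$ variables with $w\geq 3$, whereas the theorem only assumes $n$ variables. Since $I\subseteq \mm_S^2$ forces $\HF(S/I,1)=n$, the case $d=1$ of the Hilbert--Samuel inequality yields $n\leq w$. To remedy the shortfall, I would embed by setting $S' = \Bbbk[x_1,\ldots,x_w]$ and $I'= IS' +(x_{n+1},\ldots,x_w)\subseteq S'$, so that $S'/I'\cong S/I$ and the Hilbert--Samuel function is preserved. Since $x_{n+1},\ldots,x_w$ act as indeterminates on $S'/IS'\cong (S/I)[x_{n+1},\ldots,x_w]$, they form a regular sequence, and tensoring a minimal free resolution of $S/I$ over $S$ (extended to $S'$) with the Koszul complex on these variables produces a minimal free resolution of $S'/I'$ over $S'$. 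This gives the formula
$$
b_i^{S'}(S'/I') = \sum_{j+k = i} b_j^S(S/I)\binom{w-n}{k},
$$
and in particular $b_i^S(S/I)\leq b_i^{S'}(S'/I')$.

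With the problem transported to $S'$, Theorem~\ref{ThmBHP} gives $b_i^{S'}(S'/I')\leq b_i^{S'}(S'/\Lex(I'))$; since $\Lex$ preserves Hilbert functions, $L := \Lex(I')$ is a lexsegment ideal of $S'$ still satisfying \eqref{EqConstraintHS}. Then Lemma~\ref{LemmaSyzygiesHyperplane} and Proposition~\ref{PropEstimateHyperplaneSection}, both applied in $S'$, yield
$$
b_i^{S'}(S'/L) \leq \ell\bigl(\widehat{S'}/\widehat{L}\bigr)\binom{w}{i} \leq (3e)^{\sqrt{2w}}\binom{w}{i},
$$
which chains with the previous inequality to give the desired bound whenever $w\geq 3$.

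The residual cases $w\in\{1,2\}$ are not covered by Proposition~\ref{PropEstimateHyperplaneSection}, but they are very restrictive: the bound $\HS(S/I,d)\leq 1+dw$ together with $n\leq w$ limits $S/I$ to a short explicit list in at most two variables, and the inequality $b_i(S/I)\leq \binom{w}{i}(3e)^{\sqrt{2w}}$ can be verified directly (indeed with enormous slack). I do not anticipate a genuine obstacle in the argument; the only point requiring care is the tensor-product identity for $b_i^{S'}(S'/I')$, whose justification hinges on the observation that the added variables $x_{n+1},\ldots,x_w$ form a regular sequence modulo $IS'$.
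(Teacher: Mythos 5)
Your proposal is correct and follows essentially the same route as the paper: bound $n\leq w$ via the degree-one Hilbert--Samuel inequality, pass to $S'=\Bbbk[x_1,\ldots,x_w]$ with $I'=IS'+(x_{n+1},\ldots,x_w)$ exactly as in the proof of Corollary \ref{CorValla}, and then chain Theorem \ref{ThmBHP}, Lemma \ref{LemmaSyzygiesHyperplane}, and Proposition \ref{PropEstimateHyperplaneSection}. Your added justification of the tensor-product Betti formula and the separate check of the small cases $w\in\{1,2\}$ (which the paper leaves implicit, since its application only needs $w\geq 3$) are fine and do not change the argument.
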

\begin{proof}
Since $I\subseteq \mm_S^2$ and $\HS(S/I,1) \leq 1+w$, we see that $n \leq w$.
We add new variables and consider the ideal $I'=IS' + (x_{n+1}, \ldots, x_w) \subseteq S' = \Bbbk[x_1, \ldots, x_w]$.
 As in the proof of Corollary \ref{CorValla},
we have $S'/I' = S/I$ and $b_i(S/I) = b_i^S(S/I) \leq  b_i^{S'}(S'/I')$.
The conclusion now follows from Theorem \ref{ThmBHP}, Lemma \ref{LemmaSyzygiesHyperplane} and Proposition \ref{PropEstimateHyperplaneSection}.
\end{proof}

We can now  conclude the proof of Theorem \ref{TheoremGeneralBounds}.

\begin{proof}[Proof of Theorem \ref{TheoremGeneralBounds}]
Let $\G$ be a numerical semigroup with $\w(\G) =w$.
By Herzog's theorem \cite{Herzog}, we may assume $\nu \geq 3$, hence, $w\geq 3$.
By Corollary \ref{CorLargeWidth}, we may assume $w \leq m(\G)-2$.
By \eqref{EqBoundIGJG}, we have $b_i(R_\G) \leq b_i(Q/J_\G)$.
By Theorems \ref{ThmJG} and \ref{TheoremBoundArbitraryHomogeneous},
we have $b_i(Q/J_\G)\leq {w \choose i } (3e)^{\sqrt{2w}}$.
\end{proof}

\section{Proof of the Herzog-Stamate conjecture for 4-generated numerical semigroups}\label{SectionFourGenerated}

In this section, we  concentrate on 4-generated semigroups, that is, we assume $\nu = 3$.
In this case, by refining the analysis of Section \ref{SectionSyzygies},
we are able to  prove Conjectures \ref{ConjectureHS} and \ref{ConjectureBetti} for large enough width.

\begin{thm}\label{TheoremBoundsTwoVariables}
Let $S =  \Bbbk[x,y,z]$ and $w \geq 40$.
Let $L \subseteq \mm_S^2 $ be a lexsegment ideal such that
 $\HS(S/L,d) \leq 1+dw$ for all $d \in \NN$.
Then, we have
$
\mu(L) = b_0(L) \leq {w+1 \choose 2}
$
and $b_1(L) \leq 2 {w+1 \choose 3}$.
\end{thm}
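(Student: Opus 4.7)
The plan is to exploit the two features that distinguish this setting from the general one of Section~\ref{SectionSyzygies}: the ambient ring has only three variables, and the ideal $L$ is lex (hence stable). I would combine the Eliahou--Kervaire formulas with the hyperplane-section computation of Lemma~\ref{LemmaSyzygiesHyperplane} to produce sharp enough estimates for $\mu(L)$ and $b_1(L)$.

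First, I would rewrite $\mu(L)$ and $b_1(L)$ in a form the Hilbert--Samuel constraint can control directly. By Eliahou--Kervaire applied to the stable ideal $L\subseteq S=\Bbbk[x,y,z]$, one has $b_i(L)=\sum_{u\in G(L)}\binom{\max(u)-1}{i}$, where $\max(u)$ denotes the largest index of a variable dividing $u$. Letting $\beta$ be the number of minimal generators of $L$ involving $y$ but not $z$ and $c$ the number involving $z$, and using that $L$ is $\mm_S$-primary (so exactly one pure power of $x$ is a minimal generator), this gives
$$\mu(L)=1+\beta+c,\qquad b_1(L)=\beta+2c.$$
Applying equation~\eqref{EqBettiHyperplane1} from Lemma~\ref{LemmaSyzygiesHyperplane} with $n=3$ and $i=0$ yields $\mu(L)=\mu(\wL)+\ell(\wS/\wL)$; since the minimal generators of a lex ideal in two variables are $x^{\alpha},x^{\alpha_1}y,\ldots,y^{\beta}$ with $\alpha=\alpha_0>\alpha_1>\cdots$, one has $\mu(\wL)=\beta+1$, which identifies the clean relation $c=\ell(\wS/\wL)$.

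The main content will then be a sharp bound on $\ell(\wS/\wL)$ via the two-variable lex structure. Because $\alpha_0>\alpha_1>\cdots>\alpha_{\beta-1}\geq 1$ are strictly decreasing positive integers, $\alpha_b\leq\alpha-b$, so every minimal generator of $\wL$ has degree at most $\alpha$. Consequently every monomial of degree $\alpha$ lies in $\wL$, i.e., $\mm_{\wS}^\alpha\subseteq\wL$, so $\HF(\wS/\wL,d)=0$ for $d\geq\alpha$ and $\ell(\wS/\wL)=\HS(\wS/\wL,\alpha-1)$. On the other hand, the decomposition $S/L=\bigoplus_h(\wS/L_h)z^h$ already used in the proof of Lemma~\ref{LemmaSyzygiesHyperplane} gives $\HF(\wS/\wL,d)\leq\HF(S/L,d)$ term by term, so $\HS(\wS/\wL,d)\leq\HS(S/L,d)\leq 1+dw$. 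Evaluating at $d=\alpha-1$ yields $\ell(\wS/\wL)\leq 1+(\alpha-1)w$. Moreover, $x^\alpha$ is the minimal generator of $L$ of smallest degree, so no monomial of degree $<\alpha$ belongs to $L$ and $\HS(S/L,\alpha-1)=\binom{\alpha+2}{3}$; the Hilbert--Samuel constraint then imposes
$$\binom{\alpha+2}{3}\leq 1+(\alpha-1)w,$$
a cubic restriction that forces $\alpha$ to be of order $\sqrt{w}$.

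Combining everything with $\beta\leq\alpha$ (since $y^\beta$ is a minimal generator of $\wL$ of degree at most $\alpha$), I obtain
$$\mu(L)\leq\alpha(w+1)+2-w,\qquad b_1(L)\leq\alpha(2w+1)+2-2w.$$
The desired bounds $\binom{w+1}{2}$ and $2\binom{w+1}{3}$ will then follow provided $\alpha\leq(w+2)/2$ and a much weaker bound on $\alpha$, respectively. The main obstacle is verifying the first of these inequalities: a direct numerical check from the cubic restriction above shows $\alpha\leq(w+2)/2$ holds once $w$ exceeds a small absolute constant, and the hypothesis $w\geq 40$ leaves ample room; the $b_1$ bound is comfortable since $\alpha$ grows only as $\sqrt{w}$ whereas the target is cubic in $w$.
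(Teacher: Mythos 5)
Your bookkeeping is fine and is in fact equivalent to what the paper does: the Eliahou--Kervaire count $\mu(L)=1+\alpha+c$, $b_1(L)=\alpha+2c$ with $c=\ell(\wS/\wL)$ (your generator count ``$\beta$'' automatically equals $\alpha$ for a two-variable artinian lex ideal), and the constraint $\binom{\alpha+2}{3}\leq 1+(\alpha-1)w$, which forces $\alpha=O(\sqrt{w})$, both appear in the paper's proof. The gap is in the key quantitative step, the bound on $c=\ell(\wS/\wL)$. Your claim that the minimal generators of $\wL$ all have degree at most $\alpha$, hence $\mm_{\wS}^{\alpha}\subseteq\wL$ and $\ell(\wS/\wL)=\HS(\wS/\wL,\alpha-1)\leq 1+(\alpha-1)w$, is false: a two-variable lex ideal has generators $x^{\alpha-i}y^{\beta_i+i}$ where the $y$-exponents can jump, so generators can have degree up to $\beta$, the least pure power of $y$, and the Hilbert--Samuel hypothesis only forces $\beta\leq 2w+1$, which is of order $w$, not $\sqrt{w}$. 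Concretely, for $w=40$ take $L=(x^2)+x(y,z)^{38}+(y,z)^{40}\subseteq \mm_S^2$: this is a lexsegment ideal with $\HS(S/L,d)\leq 1+40d$ for all $d$ (with equality at $d=38,39$), yet $\alpha=2$, $\wL=(x^2,xy^{38},y^{40})$, so $\mm_{\wS}^{2}\not\subseteq\wL$, the generators of $\wL$ have degrees $39$ and $40$, and $\ell(\wS/\wL)=78>41=1+(\alpha-1)w$. So your central inequality is not just unproved but false, and with it the displayed bounds $\mu(L)\leq\alpha(w+1)+2-w$ and $b_1(L)\leq\alpha(2w+1)+2-2w$ collapse; the correct ``na\"ive'' bound $\ell(\wS/\wL)\leq 1+(\beta-1)w$ is only $O(w^{2})$ and does not give $\mu(L)\leq\binom{w+1}{2}$.

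The repair is exactly where the paper puts its effort: one must control $\beta$ as well as $\alpha$. From $(x^{\alpha})+(x,y)^{\beta}\subseteq\wL$ one gets $\ell(\wS/\wL)\leq\binom{\alpha+1}{2}+\alpha(\beta-\alpha)$, i.e.\ $\HF(\wS/\wL,d)\leq\alpha$ in degrees $\alpha\leq d<\beta$; and a second application of the Hilbert--Samuel hypothesis at $d=\beta-1$, using that a lex ideal with least pure $y$-power $y^{\beta}$ contains no monomial in $y,z$ of degree less than $\beta$, gives $\binom{\alpha+1}{3}+\binom{\beta+1}{2}\leq 1+(\beta-1)w$ and hence $\beta\leq 2w+1$. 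Combined with $\alpha\leq\sqrt{6w+4}-1$ this yields $b_0(L),b_1(L)=O(w^{3/2})$, which beats $\binom{w+1}{2}$ and $2\binom{w+1}{3}$ once $w\geq 100$; the paper then disposes of $40\leq w\leq 99$ by a finite check over the admissible pairs $(\alpha,\beta)$. If you insert this two-parameter estimate for $\ell(\wS/\wL)$ in place of your vanishing claim, the rest of your argument (the Eliahou--Kervaire reduction and the final numerical comparison) goes through, but note that a single clean inequality such as $\alpha\leq(w+2)/2$ no longer suffices by itself: the final comparison genuinely involves both $\alpha$ and $\beta$.
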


\begin{proof}
Let $x^\alpha, y^\beta$ denote the smallest pure powers of $x, y $ in $L$.
By assumption, we have $2\leq \alpha\leq \beta$.
Observe that $\wL \subseteq \wS = \Bbbk[x,y]$ is also a lexsegment ideal, thus,
it is of the form
\begin{equation}\label{EqLex2Vars}
\wL = \big(x^\alpha, x^{\alpha-1}y^{\beta_1+1}, x^{\alpha-2}y^{\beta_2+2}, \ldots, xy^{\beta_{\alpha-1}+\alpha-1}, y^\beta\big) = \sum_{i=0}^{\alpha-1} x^{\alpha-i}\big(y^{\beta_i+i}\big)+\big(y^\beta\big)
\end{equation}
for some  integers $0 \leq \beta_1 \leq \beta_2 \leq \cdots \leq \beta_{\alpha-1} \leq \beta-\alpha$.
It follows that  $\mu(\wL) = \alpha +1$, and, since $\wL$ has projective dimension one,  $b_1(\wL) = \alpha$.
Moreover, since $H=(x^\alpha)+(x,y)^\beta \subseteq \wL$, we have the estimate
\begin{equation}\label{EqEstimateLength2Vars}
\ell(\wS/\wL) \leq \ell(\wS/H) = \HS(\wS/H,\beta-1 ) =
\HS(\wS,\alpha-1 ) + \sum_{d=\alpha}^{\beta-1}\HF(\wS/H,d) = {\alpha+1 \choose 2} + \alpha(\beta-\alpha).
\end{equation}

Using equation \eqref{EqBettiHyperplane1}, we obtain the following bounds for the Betti numbers of $L$
\begin{equation}\label{EqBoundsB}
\begin{split}
b_0(L) &= b_0(\wL) + \ell(\wS/\wL) \leq \alpha+1 + {\alpha+1 \choose 2} + \alpha(\beta-\alpha) = \alpha\beta  - \frac{\alpha(\alpha-3)}{2}+1,
\\
 b_1(L) &= b_1(\wL) + 2\ell(\wS/\wL) \leq \alpha + 2{\alpha+1 \choose 2} + 2\alpha(\beta-\alpha) = 2\alpha\beta -\alpha^2+2\alpha.
\end{split}
\end{equation}

We now use the assumption on the Hilbert-Samuel function twice, for $d = \alpha-1$  and $d = \beta-1$.
Since $L$ contains no monomials of degree at most $\alpha-1$, we have
\begin{align}
& {\alpha+2 \choose 3} = \HS(S,\alpha-1) = \HS(S/L,\alpha-1) \leq 1 + (\alpha-1)w  \label{EqAlphaMinusOne}
\\
\Rightarrow & (\alpha+2)(\alpha+1)\alpha \leq 6 +6(\alpha-1)w \nonumber
\\
\Rightarrow &
(\alpha+1)^2 + \alpha+1 = (\alpha+2)(\alpha+1) \leq 6+6w \nonumber
\\
\Rightarrow &
(\alpha+1)^2\leq 6w+6-\alpha-1\leq 6w+4 \nonumber
\\
\Rightarrow 
&\alpha \leq \sqrt{6w+4}-1. \label{EqBoundAlpha}
\end{align}
Similarly, $L$ contains no monomials in $y,z$ of degree at most $\beta-1$,
thus, by inclusion-exclusion we get
\begin{equation}\label{EqBetaMinusOne}
\begin{split}
1 + (\beta-1)w  &\geq \HS(S/L,\beta-1)  
\geq
\HS(S,\alpha-1) +\HS(\Bbbk[x,y],\beta-1)-\HS(\Bbbk[x,y],\alpha-1)\\
&={\alpha+2 \choose 3} +{\beta+1 \choose 2}-{\alpha+1 \choose 2}
= {\alpha+1 \choose 3} +{\beta+1 \choose 2},
\end{split}
\end{equation}
from which we obtain the inequalities
\begin{equation}\label{EqBoundBeta}
(\beta+1)\beta \leq 2 + 2(\beta-1)w \Rightarrow \beta+1\leq 2+2w\Rightarrow
\beta \leq 2w+1.
\end{equation}
Using  \eqref{EqBoundsB}, \eqref{EqBoundAlpha}, \eqref{EqBoundBeta} we obtain
\begin{equation}\label{EqBoundsB2}
\begin{split}
b_0(L) &\leq
 \alpha\beta -\frac{\alpha(\alpha-3)}{2}+1
\leq \alpha \beta +2 \leq (\sqrt{6w+4}-1) (2w+1)+2,  \\
b_1(L) &\leq  2\alpha\beta -\alpha^2+2\alpha\leq 2\alpha\beta +1
\leq 2(\sqrt{6w+4}-1) (2w+1). 
\end{split}
\end{equation}

It is straightforward to verify that, if $w \geq 100$, then we have 
the inequalities
$$
(\sqrt{6w+4}-1) (2w+1)+2 \leq 5 w^{\frac{3}{2}}\leq {w+1 \choose 2}
\quad
\text{and}
\quad
2(\sqrt{6w+4}-1) (2w+1) \leq 10 w^{\frac{3}{2}}\leq 2{w+1 \choose 3},
$$
yielding the desired bounds for $b_0(L)$ and $b_1(L)$, under the assumption that $w \geq 100$.
For each value  $40 \leq w \leq 99$ there are finitely many  $\alpha, \beta$ satisfying \eqref{EqAlphaMinusOne} and \eqref{EqBetaMinusOne}.
We verify directly that, for each such triple $(w,\alpha,\beta)$, 
the quantities on the right hand side of the
bounds \eqref{EqBoundsB} do not exceed 
${w+1 \choose 2}$ and $2{w+1\choose 3}$, respectively.
This concludes the proof.
\end{proof}

\begin{proof}[Proof of Theorem \ref{Theorem4Generated}]
It follows from Theorem \ref{TheoremBoundsTwoVariables}, as Theorem \ref{TheoremGeneralBounds} follows from Theorem \ref{TheoremBoundArbitraryHomogeneous}.
\end{proof}

\begin{remark}
We believe that the conclusions of Theorem \ref{TheoremBoundsTwoVariables} hold also if $4 \leq w \leq 39$,
and this would lead to a complete proof of Conjecture \ref{ConjectureBetti} for 4-generated semigroups.
However, 
our method does not work for such values of $w$.
For example, we note  that the estimate
\eqref{EqEstimateLength2Vars} becomes too  coarse if the $\beta_i$'s are small,
whereas \eqref{EqBetaMinusOne}  becomes too  coarse if  the $\beta_i$'s are large.
A refinement of the argument would probably need to  account for
all possible sequences $\beta_1 \leq \cdots \leq \beta_{\alpha-1}$
and apply the  Hilbert-Samuel inequality  for all $\alpha \leq d \leq \beta-1$,
but this refined optimization problem appears to be intractable. 
Alternatively,
we  note that our proof identifies a finite number (about 200) of possible exceptions $(\alpha,\beta)$,
and, hence,
 a finite number of possible $\wL$ contradicting the conclusions. 
 Thus, 
 one could try to complete the proof by analyzing  each $\wL$ individually, with the help of a computer;
however, the issue with this approach is that the finite number of ideals $\wL$ is still too large to devise an effective computation.
\end{remark}

\section*{Acknowledgments}
The authors would like to thank Francesco Strazzanti for some helpful conversations.
The software Macaulay2 \cite{M2} was used for some computations related to this paper.

\section*{Funding}
G. C.  was partially supported by a grant from the Simons Foundation (41000748, G.C.).
A. M. was supported by 
the grant “Proprietà locali e globali di
anelli e di varietà algebriche” PIACERI 2020–22, Università degli Studi di Catania.
A. S.  was partially supported by the grant
 PRIN 2020355B8Y “Squarefree Gr\"oner degenerations, special varieties and related topics”
 and by the INdAM – GNSAGA Project  CUP E55F22000270001.

\end{document}